\documentclass[10pt]{amsart}

\usepackage[utf8x]{inputenc}
\usepackage{subfigure}
\usepackage[english]{babel}
\usepackage{amsmath, amsfonts, amssymb}
\usepackage{color}
\usepackage{tikz}
\usetikzlibrary{positioning,shapes,shadows,arrows}
\usepackage{enumitem}
\usepackage{hhline}
\usepackage{array}

\newtheorem{Theoreme}{Theorem}[section]

\newtheorem{Proposition}[Theoreme]{Proposition}

\newtheorem{Definition}[Theoreme]{Definition}






\newcommand{\trprec}{\vartriangleleft} 
\newcommand{\ntrprec}{\ntriangleleft}

\DeclareMathOperator{\Itrees}{trees} 
\DeclareMathOperator{\Isize}{size} 
\DeclareMathOperator{\Iir}{ir} 



\newcommand\dec{F_{\ge}}
\newcommand\inc{F_{\le}}


\tikzstyle{Red} = [color = red]
\tikzstyle{Blue} = [color = blue]
\tikzstyle{Green} = [color = darkGreen]
\tikzstyle{Gray} = [color = gray]

\tikzstyle{Path} = [line width = 1.2]
\tikzstyle{StrongPath} =  [line width=2.5]
\tikzstyle{DPoint} = [fill, radius=0.1]

\tikzstyle{Point} = [fill, radius=0.08]

\author{Fr\'ed\'eric Chapoton, Gr\'egory Ch\^atel, Viviane Pons}

\address{
Institut Camille Jordan, Univ. Claude Bernard Lyon \\
Laboratoire d'Informatique Gaspard Monge, Univ. Paris-Est Marne-la-Vall\'ee \\ 
Fakult\"at f\"ur Mathematik, Universit\"at Wien
    }
    
\title{Two bijections on Tamari Intervals}

\keywords{Tamari lattice, Tamari intervals, binary trees, flows of ordered trees}

\begin{document}
\maketitle

\begin{abstract}
We use a recently introduced combinatorial object, the \emph{interval-poset}, to describe two bijections on intervals of the Tamari lattice. Both bijections give a combinatorial proof of some previously known results. The first one is an inner bijection between Tamari intervals that exchanges the \emph{initial rise} and \emph{lower contacts} statistics. Those were introduced by Bousquet-Mélou, Fusy, and Préville-Ratelle who proved they were symmetrically distributed but had no combinatorial explanation. The second bijection sends a Tamari interval to a closed flow of an ordered forest. These combinatorial objects were studied by Chapoton in the context of the Pre-Lie operad and the connection with the Tamari order was still unclear.

Nous utilisons les \emph{intervalles-posets}, très récemment introduits, pour décrire deux bijections sur les intervalles du treillis de Tamari. Nous obtenons ainsi des preuves combinatoires de précédents résultats. La première bijection est une opération interne sur les intervalles qui échange les statistiques de la \emph{montée initiale} et du \emph{nombre de contacts}. Ces dernières ont été introduites par Bousquet-Mélou, Fusy et Préville-Ratelle qui ont prouvé qu'elles étaient symétriquement distribuées sans pour autant proposer d'explication combinatoire. La seconde bijection fait le lien avec un objet étudié par Chapoton dans le cadre de l'opérade Pré-Lie : les flots sur les forêts ordonnées. Le lien avec l'ordre de Tamari avait déjà été remarqué sans pour autant être expliqué.
\end{abstract}

\section{Introduction}
The intervals of the Tamari lattice $T_{n}$ have recently been studied in various combinatorial and algebraic contexts. The first notable result was from Chapoton \cite{Chap} who proved that they were enumerated by a very nice formula, namely

\begin{equation}
\label{eq:intervals-formula}
\text{Number of intervals of $T_{n}$} = \frac{2}{n(n +1)} \binom{4 n + 1}{n - 1}.
\end{equation}

Note that this also counts the number of planar triangulations (\emph{i.e.}, maximal planar graphs) \cite{SchaefferTriang} and an explicit bijection was given by Bernardi and Bonichon \cite{BijTriangulations}. The formula itself was recently generalized to the $m$-Tamari lattices $T_{n}^{(m)}$ by Bousquet-Mélou, Fusy, and Préville-Ratelle~\cite{mTamari},

\begin{equation}
\label{eq:m-intervals-formula}
\text{Number of intervals of $T_{n}^{(m)}$} = \frac{m+1}{n(mn +1)} \binom{(m+1)^2 n + m}{n - 1}.
\end{equation} 

It is very remarkable than both formulas \eqref{eq:intervals-formula} and \eqref{eq:m-intervals-formula} have such simple factorized expressions. It convinces us that the combinatorics of intervals of the Tamari lattice is indeed very interesting and still has many properties to be discovered. In a very recent work \cite{Me_Tamari_FPSAC, Me_Tamari}, a subset of the authors of the present paper introduced a new object to this purpose: the interval-posets of Tamari. These are labelled posets which represent intervals of the Tamari lattice. The interval-posets were used to retrieve the functional equations leading to \eqref{eq:intervals-formula} and \eqref{eq:m-intervals-formula} and allowed for new enumeration results. In this paper, we intend to show how these new objects can be used to solve other open problems on Tamari intervals.

We first give a short summary of basic definitions and constructions in Section \ref{sec:interval-poset}. Section \ref{sec:initial-rise} is dedicated to an inner bijection on interval-posets. Thanks to it, we obtain a combinatorial proof of what was left as an open question in \cite{mTamari}: the symmetric distribution of the intial rise and lower contacts of intervals. Our bijection is based on two different recursive decompositions of interval-posets. 

In Section \ref{sec:flows}, we describe a bijection which sends interval-posets to flows of ordered forests. The flows of rooted trees appeared in  \cite{Flows} in an algebraic context and a surprising connection was made with the Tamari lattice by comparing some enumeration polynomials to those of \cite{Me_Tamari}. The explicit bijection we now present is a step forward into understanding the relations between the two theories. 

The present paper intends to be a summarized overview of both bijections of Section \ref{sec:initial-rise} and \ref{sec:flows}. We believe that both of them will lead us to more results and to a better understanding of the numerous combinatorial aspects of Tamari intervals. This should  be further explored in some future work.

\section{Interval-posets}
\label{sec:interval-poset}

\subsection{Tamari lattice}
\label{sub-sec:tamari-lattice}

The Tamari lattice is an order on Catalan objects which was first described by Tamari \cite{Tamari2} on formal parentheses. On binary trees, it can be seen as the transitive and reflexive closure of the right rotation operation. We recursively define a binary tree by being either an empty tree (or a leaf) or a pair of binary trees (resp. called \emph{left} and \emph{right} subtrees) grafted on a root. The size of a tree is the number of internal nodes. If a tree $T$ is composed of a root node $x$ with $A$ and $B$ as respectively left and right subtrees, we write $T = x(A,B)$. The right rotation on a node $y$ with a left child $x$ consists in replacing $y(x(A,B),C)$ by $x(A,y(B,C))$ where $A$, $B$, and $C$ are binary trees (possibly empty) as illustrated on Figure \ref{fig:tree-right-rotation}.

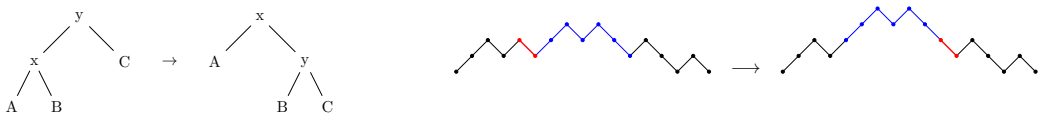
\begin{figure}[ht]
\centering
\begin{tabular}{ccc}
\scalebox{0.6}{
  \begin{tikzpicture}[baseline=0.3cm]
    \node(TX1) at (-3,0){x};
    \node(TY1) at (-2,1){y};
    \node(TA1) at (-3.5,-1){A};
    \node(TB1) at (-2.5,-1){B};
    \node(TC1) at (-1,0){C};
    \node(to) at (0,0){$\to$};
    \node(TX2) at (2,1){x};
    \node(TY2) at (3,0){y};
    \node(TA2) at (1,0){A};
    \node(TB2) at (2.5,-1){B};
    \node(TC2) at (3.5,-1){C};
    
    \draw (TA1) -- (TX1);
    \draw (TB1) -- (TX1);
    \draw (TC1) -- (TY1);
    \draw (TX1) -- (TY1);
    
    \draw (TA2) -- (TX2);
    \draw (TB2) -- (TY2);
    \draw (TC2) -- (TY2);
    \draw (TY2) -- (TX2);
  \end{tikzpicture}
} & \hspace{0.5cm} & \scalebox{0.7}{

\begin{tabular}{ccc}

\scalebox{0.3}{
\begin{tikzpicture}
\draw[Path] (0.000, 0.000) -- (1.000, 1.000);
\draw[Path] (1.000, 1.000) -- (2.000, 2.000);
\draw[Path] (2.000, 2.000) -- (3.000, 1.000);
\draw[Path] (3.000, 1.000) -- (4.000, 2.000);
\draw[DPoint] (0.000, 0.000) circle;
\draw[DPoint] (1.000, 1.000) circle;
\draw[DPoint] (2.000, 2.000) circle;
\draw[DPoint] (3.000, 1.000) circle;
\draw[StrongPath,Red] (4.000, 2.000) -- (5.000, 1.000);
\draw[DPoint,Red] (4.000, 2.000) circle;
\draw[Path,Blue] (5.000, 1.000) -- (6.000, 2.000);
\draw[DPoint,Red] (5.000, 1.000) circle;
\draw[Path,Blue] (6.000, 2.000) -- (7.000, 3.000);

\draw[Path,Blue] (7.000, 3.000) -- (8.000, 2.000);
\draw[Path,Blue] (8.000, 2.000) -- (9.000, 3.000);
\draw[Path,Blue] (9.000, 3.000) -- (10.000, 2.000);
\draw[Path,Blue] (10.000, 2.000) -- (11.000, 1.000);
\draw[DPoint,Blue] (6.000, 2.000) circle;
\draw[DPoint,Blue] (7.000, 3.000) circle;
\draw[DPoint,Blue] (8.000, 2.000) circle;
\draw[DPoint,Blue] (9.000, 3.000) circle;
\draw[DPoint,Blue] (10.000, 2.000) circle;
\draw[Path] (11.000, 1.000) -- (12.000, 2.000);
\draw[DPoint,Blue] (11.000, 1.000) circle;
\draw[Path] (12.000, 2.000) -- (13.000, 1.000);
\draw[Path] (13.000, 1.000) -- (14.000, 0.000);
\draw[Path] (14.000, 0.000) -- (15.000, 1.000);

\draw[Path] (15.000, 1.000) -- (16.000, 0.000);
\draw[DPoint] (12.000, 2.000) circle;
\draw[DPoint] (13.000, 1.000) circle;
\draw[DPoint] (14.000, 0.000) circle;
\draw[DPoint] (15.000, 1.000) circle;

\draw[DPoint] (16.000, 0.000) circle;
\end{tikzpicture}
} &
$\longrightarrow$
&
\scalebox{0.3}{
\begin{tikzpicture}
\draw[Path] (0.000, 0.000) -- (1.000, 1.000);
\draw[Path] (1.000, 1.000) -- (2.000, 2.000);
\draw[Path] (2.000, 2.000) -- (3.000, 1.000);
\draw[Path] (3.000, 1.000) -- (4.000, 2.000);
\draw[DPoint] (0.000, 0.000) circle;
\draw[DPoint] (1.000, 1.000) circle;
\draw[DPoint] (2.000, 2.000) circle;
\draw[DPoint] (3.000, 1.000) circle;
\draw[Path,Blue] (4.000, 2.000) -- (5.000, 3.000);
\draw[Path,Blue] (5.000, 3.000) -- (6.000, 4.000);

\draw[Path,Blue] (6.000, 4.000) -- (7.000, 3.000);
\draw[Path,Blue] (7.000, 3.000) -- (8.000, 4.000);
\draw[Path,Blue] (8.000, 4.000) -- (9.000, 3.000);
\draw[Path,Blue] (9.000, 3.000) -- (10.000, 2.000);
\draw[DPoint,Blue] (4.000, 2.000) circle;
\draw[DPoint,Blue] (5.000, 3.000) circle;
\draw[DPoint,Blue] (6.000, 4.000) circle;

\draw[DPoint,Blue] (7.000, 3.000) circle;
\draw[DPoint,Blue] (8.000, 4.000) circle;
\draw[DPoint,Blue] (9.000, 3.000) circle;
\draw[StrongPath,Red] (10.000, 2.000) -- (11.000, 1.000);
\draw[DPoint,Red] (10.000, 2.000) circle;
\draw[Path] (11.000, 1.000) -- (12.000, 2.000);
\draw[DPoint,Red] (11.000, 1.000) circle;
\draw[Path] (12.000, 2.000) -- (13.000, 1.000);
\draw[Path] (13.000, 1.000) -- (14.000, 0.000);
\draw[Path] (14.000, 0.000) -- (15.000, 1.000);

\draw[Path] (15.000, 1.000) -- (16.000, 0.000);
\draw[DPoint] (12.000, 2.000) circle;
\draw[DPoint] (13.000, 1.000) circle;
\draw[DPoint] (14.000, 0.000) circle;
\draw[DPoint] (15.000, 1.000) circle;

\draw[DPoint] (16.000, 0.000) circle;
\end{tikzpicture}
}
\end{tabular}}
\end{tabular}

\caption{Right rotation on binary trees and Dyck paths.}

\label{fig:tree-right-rotation}

\end{figure}

The Tamari lattice can also be described in terms of Dyck paths. A Dyck path of size $n$ is a lattice path from the origin $(0,0)$ to the point $(2n,0)$ made from a sequence of up steps $(1,1)$ and down steps $(1,-1)$ such that the path stays above the line $y=0$. These objects are counted by the Catalan numbers as well as binary trees. A simple bijection can be made between the two sets by considering the binary recursive structure of a Dyck path. Indeed, let $D$ be a Dyck path and $u$ the last up step of $D$ starting at $y=0$ (if $D$ never touches $y=0$ then $u$ is the first step of $D$). Then $D$ is made of a first Dyck path $D_1$, then the up step $u$ followed by a second Dyck path $D_2$ and a down step. The image of $D$ is then the binary tree $T$ made from $T_1$ and $T_2$, the images of respectively $D_1$ and $D_2$, see Figure \ref{fig:dyck-tree} for an example. Following this bijection, the right rotation easily translates in terms of Dyck path: it consists in switching a down step $d$ with the shortest Dyck path starting right after $d$ as illustrated on Figure \ref{fig:tree-right-rotation}.

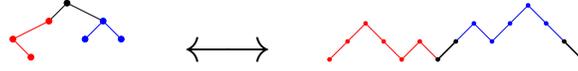
\begin{figure}[ht]
\centering
\scalebox{0.6}{




\begin{tabular}{ccccc}
\scalebox{0.8}{
\begin{tikzpicture}
\node (N0) at (1.750, 0.000){};
\node (N00) at (1.250, -0.500){};
\node (N000) at (0.250, -1.000){};
\node (N0001) at (0.750, -1.500){};
\draw (N0.center) -- (N00.center);
\draw[Point] (N0) circle;
\draw[Point, Red] (N0001) circle;
\draw[Red] (N000.center) -- (N0001.center);
\draw[Point, Red] (N000) circle;
\draw[Red] (N00.center) -- (N000.center);
\draw[Point, Red] (N00) circle;
\node (N01) at (2.750, -0.500){};
\draw (N0.center) -- (N01.center);
\node (N010) at (2.250, -1.000){};
\draw[Point, Blue] (N010) circle;
\node (N011) at (3.250, -1.000){};
\draw[Point, Blue] (N011) circle;
\draw[Blue] (N01.center) -- (N010.center);
\draw[Blue] (N01.center) -- (N011.center);
\draw[Point, Blue] (N01) circle;
\end{tikzpicture}
}
&
\hspace{0.5cm}
&
\scalebox{3}{$\longleftrightarrow$}
&
\hspace{0.5cm}
&
\scalebox{0.4}{
\begin{tikzpicture}
\draw[Path,Red] (0.000, 0.000) -- (1.000, 1.000);
\draw[Path,Red] (1.000, 1.000) -- (2.000, 2.000);
\draw[Path,Red] (2.000, 2.000) -- (3.000, 1.000);
\draw[Path,Red] (3.000, 1.000) -- (4.000, 0.000);
\draw[Path,Red] (4.000, 0.000) -- (5.000, 1.000);
\draw[Path,Red] (5.000, 1.000) -- (6.000, 0.000);
\draw[DPoint,Red] (0.000, 0.000) circle;
\draw[DPoint,Red] (1.000, 1.000) circle;
\draw[DPoint,Red] (2.000, 2.000) circle;
\draw[DPoint,Red] (3.000, 1.000) circle;
\draw[DPoint,Red] (4.000, 0.000) circle;
\draw[DPoint,Red] (5.000, 1.000) circle;
\draw[StrongPath] (6.000, 0.000) -- (7.000, 1.000);
\draw[DPoint] (6.000, 0.000) circle;
\draw[Path,Blue] (7.000, 1.000) -- (8.000, 2.000);
\draw[DPoint] (7.000, 1.000) circle;
\draw[Path,Blue] (8.000, 2.000) -- (9.000, 1.000);
\draw[Path,Blue] (9.000, 1.000) -- (10.000, 2.000);
\draw[Path,Blue] (10.000, 2.000) -- (11.000, 3.000);
\draw[Path,Blue] (11.000, 3.000) -- (12.000, 2.000);
\draw[Path,Blue] (12.000, 2.000) -- (13.000, 1.000);
\draw[DPoint,Blue] (8.000, 2.000) circle;
\draw[DPoint,Blue] (9.000, 1.000) circle;
\draw[DPoint,Blue] (10.000, 2.000) circle;
\draw[DPoint,Blue] (11.000, 3.000) circle;
\draw[DPoint,Blue] (12.000, 2.000) circle;
\draw[StrongPath] (13.000, 1.000) -- (14.000, 0.000);
\draw[DPoint] (13.000, 1.000) circle;
\draw[DPoint] (14.000, 0.000) circle;
\end{tikzpicture}
}
\end{tabular}}
    \caption{Bijection between Dyck paths and binary trees.}
    
    \label{fig:dyck-tree}
    
\end{figure}

\subsection{Construction of interval-posets}

We only give here a very short summary of the construction process. For a more detailed description, refer to \cite{Me_Tamari} where those objects were introduced.

A \emph{binary search tree} is a labelled binary tree where if a node is labelled $k$, all nodes of its left subtree have labels smaller than or equal to $k$, and all nodes on its right subtree have labels greater than $k$. There is a unique way to label a binary tree of size $n$ with labels in $\{1, \dots, n\}$ such that the result is a binary search tree. Such a labelled tree can be seen as a poset: $a$ precedes $b$ ($a \trprec b$) if $b$ is an ancestor of $a$. The linear extensions of the binary search tree labelled with labels in $1, \dots, n$ are permutations and form an interval of the weak order called the \emph{sylvester class} of the tree. The details of this construction can be found in \cite{PBT2}.

From a binary search tree $T$, one can construct bijectively two labelled forests of planar trees: the final forest $\dec(T)$ and the initial forest $\inc(T)$. The final forest is obtained by keeping only the decreasing relations of $T$. We write $b \trprec_{\dec(T)} a$ if and only if $b > a$ and $b \trprec_T a$, in other words if and only if $b$ is in the right subtree of $a$. Symmetrically, $\inc(T)$ is formed by the increasing relations of $T$, see the example on Figure~\ref{fig:example-forest}.

\begin{figure}[ht]
\centering

\begin{tabular}{|c|c|c|}
  \hline	
  Tree $T$ & $\inc(T)$ & $\dec(T)$ \\
  \hline
  \scalebox{0.6}{
    \begin{tikzpicture}
      \node(N1) at (-2,-1){1};
      \node(N2) at (-1.5,-3){2};
      \node(N3) at (-1,-2){3};
      \node(N4) at (-0.5,-3){4};
      \node(N5) at (0,0){5};
      \node(N6) at (1,-2){6};
      \node(N7) at (2,-1){7};
      \node(N8) at (2.5,-3){8};
      \node(N9) at (3,-2){9};
      \node(N10) at (3.5,-3){10};
      
      \draw (N1) -- (N5);
      \draw (N7) -- (N5);
      \draw (N3) -- (N1);
      \draw (N6) -- (N7);
      \draw (N9) -- (N7);
      \draw (N2) -- (N3);
      \draw (N4) -- (N3);
      \draw (N8) -- (N9);
      \draw (N10) -- (N9);
    \end{tikzpicture}
  }
  &
  \scalebox{0.6}{
    \begin{tikzpicture}
    \node(T1) at (0,0) {1};
    \node(T2) at (0,-1) {2};
    \node(T3) at (1,-1) {3};
    \node(T4) at (1,-2) {4};
    \node(T5) at (2,0) {5};
    \node(T6) at (3,0) {6};
    \node(T7) at (4,0) {7};
    \node(T8) at (3,-1) {8};
    \node(T9) at (4,-1) {9};
    \node(T10) at (4,-2) {10};
    \draw[color=blue] (T1) -- (T5);
    \draw[color=blue] (T2) -- (T3);
    \draw[color=blue] (T3) -- (T5);
    \draw[color=blue] (T4) -- (T5);
    \draw[color=blue] (T6) -- (T7);
    \draw[color=blue] (T8) -- (T9);
    \end{tikzpicture}
  }
  &
  \scalebox{0.6}{
    \begin{tikzpicture}
    \node(T2) at (0,-1) {2};
    \node(T4) at (1,-2) {4};
    \node(T3) at (1,-1) {3};
    \node(T1) at (1,0) {1};
    \node(T6) at (2,-1) {6};
    \node(T8) at (2,-2) {8};
    \node(T5) at (3,0) {5};
    \node(T7) at (3,-1) {7};
    \node(T9) at (3,-2) {9};
    \node(T10) at (3,-3) {10};
    \draw[color=red] (T2) -- (T1);
    \draw[color=red] (T3) -- (T1);
    \draw[color=red] (T4) -- (T3);
    \draw[color=red] (T6) -- (T5);
    \draw[color=red] (T7) -- (T5);
    \draw[color=red] (T8) -- (T7);
    \draw[color=red] (T9) -- (T7);
    \draw[color=red] (T10) -- (T9);
    \end{tikzpicture}
  }
  
  \\
  \hline
  
\end{tabular}
\caption{Initial and final forests of a binary tree. As a convention, interval-posets are always written both horizontally and vertically: $x \trprec y$ if there is a path between $x$ and $y$ and if $x$ is placed left of or below $y$. Increasing relations will be written in blue and in a more horizontal way from left to right whereas decreasing relations are in red and vertical from bottom to top.}
\label{fig:example-forest}
\end{figure}
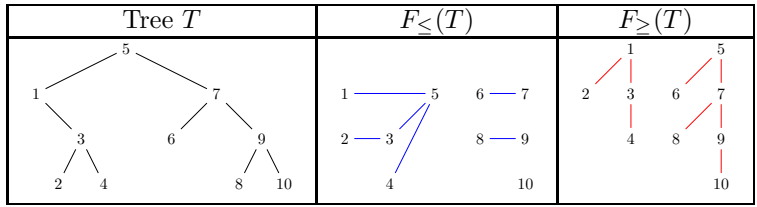

The linear extensions of $\inc(T)$ and $\dec(T)$ are respectively initial and final intervals of the weak order. The maximal (resp. minimal) permutation of the interval is the maximal (resp. minimal) permutation of the sylvester class of $T$. To construct the interval-poset of an interval $[T_1,T_2]$ we take all relations of both $\dec(T_1)$ and $\inc(T_2)$. If $\alpha$ is the minimal permutation of the sylvester class of $T_1$ and $\omega$ is the maximal permutation of the sylvester class of $T_2$, then the linear extensions of the interval posets are exactly the permutations $\mu$ satisfying $\alpha \leq \mu \leq \omega$. We proved in \cite{Me_Tamari} that these posets are exactly the ones satisfying the following property : for $a$ and $c$ labels of the poset such that $a <c$, $a \trprec c$ implies $b \trprec c$ for all $a < b < c$, and $c \trprec a$ implies $b \trprec a$ for all $a < b < c$.
Those posets are in bijection with intervals of the Tamari lattice. From a poset $I$, we can recover the poset $\inc(I)$ formed by increasing relations of $I$ which is in bijection with a binary tree $T_1$, and the poset $\dec(I)$ formed by decreasing relations of $I$ which is in bijection with a binary tree $T_2 \geq T_1$. The construction is illustrated on Figure \ref{fig:interval-poset-construction}.

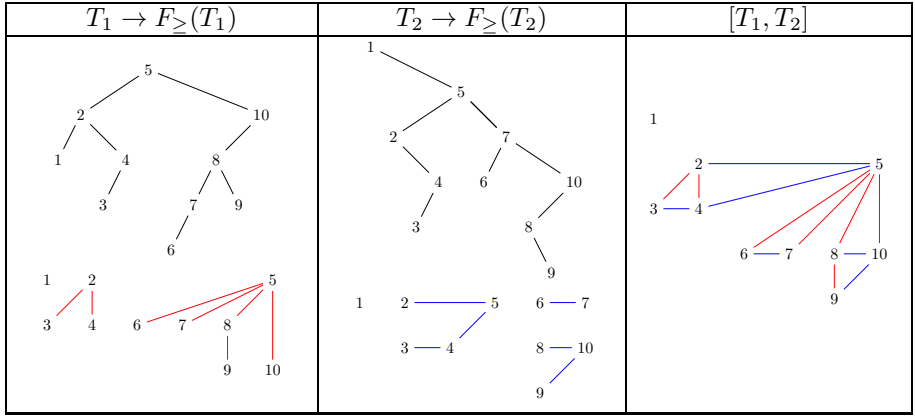
\begin{figure}[ht]
\centering
\begin{tabular}{|c|c|c|}
\hline
$T_1 \rightarrow \dec(T_1)$ & $T_2 \rightarrow \dec(T_2)$ & $[T_1, T_2]$ \\
\hline
\begin{tabular}{c}
\scalebox{0.6}{
\begin{tikzpicture}

	\node(N1) at (0,-2){1};
	\node(N2) at (0.5,-1){2};
	\node(N4) at (1.5,-2){4};
	\node(N3) at (1,-3){3};
    \node(N5) at (2,0){5};
    \node(N10) at (4.5,-1){10};
    \node(N8) at (3.5,-2){8};
    \node(N7) at (3,-3){7};
    \node(N9) at (4,-3){9};
    \node(N6) at (2.5,-4){6};
    \draw (N2) -- (N5); 
    \draw (N10) -- (N5);
    \draw (N1) -- (N2);
    \draw (N4) -- (N2);
    \draw (N8) -- (N10);
    \draw (N3) -- (N4);
    \draw (N7) -- (N8);
    \draw (N9) -- (N8);
    \draw (N6) -- (N7);
  \end{tikzpicture}
  } \\
  \scalebox{0.6}{
  \begin{tikzpicture}
  \node(T1) at (0,0) {1};
  \node(T3) at (0,-1) {3};
  \node(T4) at (1,-1) {4};
  \node(T2) at (1,0) {2};
  \node(T6) at (2,-1) {6};
  \node(T7) at (3,-1) {7};
  \node(T9) at (4,-2) {9};
  \node(T8) at (4,-1) {8};
  \node(T5) at (5,0) {5};
  \node(T10) at (5,-2) {10};
  \draw[color=red] (T3) -- (T2);
  \draw[color=red] (T4) -- (T2);
  \draw[color=red] (T6) -- (T5);
  \draw[color=red] (T7) -- (T5);
  \draw[color=red] (T8) -- (T5);
  \draw[color=red] (T9) -- (T8);
  \draw[color=red] (T10) -- (T5);
  \end{tikzpicture}

  }
\end{tabular}
  &
  \begin{tabular}{c}
  \scalebox{0.6}{
\begin{tikzpicture}
  
  \node(N1) at (0,0){1};
    \node(N5) at (2,-1){5};
    \node(N2) at (0.5,-2){2};
    \node(N4) at (1.5,-3){4};
    \node(N3) at (1, -4){3};
    
    \node(N7) at (3,-2){7};
    \node(N6) at (2.5,-3){6};
    \node(N10) at (4.5,-3){10};
    \node(N8) at (3.5, -4){8};
    \node(N9) at (4, -5){9};
    
    \draw (N5) -- (N1);
    \draw (N2) -- (N5);
    \draw (N7) -- (N5);
    \draw (N7) -- (N5);
    \draw (N4) -- (N2);
    \draw (N6) -- (N7);
    \draw (N10) -- (N7);
    \draw (N3) -- (N4);
    \draw (N8) -- (N10);
    \draw (N9) -- (N8);
    
  \end{tikzpicture}} \\
  \scalebox{0.6}{
  \begin{tikzpicture}
  \node(T1) at (0,0) {1};
  \node(T2) at (1,0) {2};
  \node(T3) at (1,-1) {3};
  \node(T4) at (2,-1) {4};
  \node(T5) at (3,0) {5};
  \node(T6) at (4,0) {6};
  \node(T7) at (5,0) {7};
  \node(T8) at (4,-1) {8};
  \node(T9) at (4,-2) {9};
  \node(T10) at (5,-1) {10};
  \draw[color=blue] (T2) -- (T5);
  \draw[color=blue] (T3) -- (T4);
  \draw[color=blue] (T4) -- (T5);
  \draw[color=blue] (T6) -- (T7);
  \draw[color=blue] (T8) -- (T10);
  \draw[color=blue] (T9) -- (T10);
  \end{tikzpicture}}
  \end{tabular}
   & 
   \scalebox{0.6}{
   \begin{tikzpicture}[baseline=-2.5cm]
   \node(T1) at (0,0) {1};
   \node(T3) at (0,-2) {3};
   \node(T4) at (1,-2) {4};
   \node(T2) at (1,-1) {2};
   \node(T6) at (2,-3) {6};
   \node(T7) at (3,-3) {7};
   \node(T9) at (4,-4) {9};
   \node(T8) at (4,-3) {8};
   \node(T5) at (5,-1) {5};
   \node(T10) at (5,-3) {10};
   \draw[color=blue] (T2) -- (T5);
   \draw[color=blue] (T3) -- (T4);
   \draw[color=red] (T3) -- (T2);
   \draw[color=blue] (T4) -- (T5);
   \draw[color=red] (T4) -- (T2);
   \draw[color=blue] (T6) -- (T7);
   \draw[color=red] (T6) -- (T5);
   \draw[color=red] (T7) -- (T5);
   \draw[color=blue] (T8) -- (T10);
   \draw[color=red] (T8) -- (T5);
   \draw[color=blue] (T9) -- (T10);
   \draw[color=red] (T9) -- (T8);
   \draw[color=red] (T10) -- (T5);
   \end{tikzpicture}
   }
  \\ \hline
\end{tabular}
\caption{Construction of an interval-poset.}
\label{fig:interval-poset-construction}
\end{figure}

\section{Initial Rise}
\label{sec:initial-rise}

\subsection{Initial rise of interval-posets}

In \cite{mTamari}, the authors give a functional equation of the generating function of Tamari intervals depending on two statistics. The statistics are given in terms of Dyck paths. The first one is the number of contacts between the lower Dyck path of the interval and the $x$-axis. On an interval-poset $I$, this statistic corresponds to $\Itrees(I)$, \emph{i.e.}, the number of components of the final forest of $I$ \cite{Me_Tamari}. In all three different methods used in \cite{Chap, mTamari, Me_Tamari} to generate intervals, this statistics is crucial to obtain the functional equation. Following the notation of \cite{mTamari}, we call it the \emph{catalytic} parameter. The authors of \cite{mTamari} also introduce a second (non essential) statistic: the \emph{initial rise} of an interval is the initial rise of its upper path, \emph{i.e.}, the number of initial up steps of the path. By simply running through the previously described bijection between Dyck paths and binary trees and following the construction process of an interval-poset, one can read this statistic directly on the interval-poset. It is the number of initial vertices $1,2,\dots,k$ such that there is no relation $k-1 \trprec k$, see Figure \ref{fig:initial-rise}. We call it the initial rise of the interval-poset and write $\Iir(I)$. If $\Iir(I) = k$ it means that $k$ is either the first vertex with $k \trprec k+1$, or $k = \Isize(I)$ and $I$ has no increasing relations.

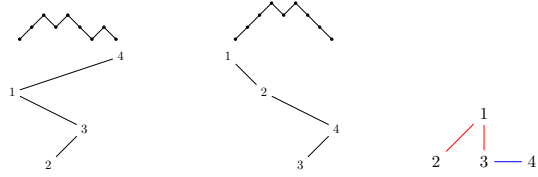
\begin{figure}[ht]
\centering
\scalebox{0.8}{
\begin{tabular}{ccccc}
\scalebox{0.2}{


\begin{tikzpicture}
\draw[Path] (0.000, 0.000) -- (1.000, 1.000);
\draw[Path] (1.000, 1.000) -- (2.000, 2.000);
\draw[Path] (2.000, 2.000) -- (3.000, 1.000);
\draw[Path] (3.000, 1.000) -- (4.000, 2.000);
\draw[Path] (4.000, 2.000) -- (5.000, 1.000);
\draw[Path] (5.000, 1.000) -- (6.000, 0.000);
\draw[Path] (6.000, 0.000) -- (7.000, 1.000);
\draw[Path] (7.000, 1.000) -- (8.000, 0.000);
\draw[DPoint] (0.000, 0.000) circle;
\draw[DPoint] (1.000, 1.000) circle;
\draw[DPoint] (2.000, 2.000) circle;
\draw[DPoint] (3.000, 1.000) circle;
\draw[DPoint] (4.000, 2.000) circle;
\draw[DPoint] (5.000, 1.000) circle;
\draw[DPoint] (6.000, 0.000) circle;
\draw[DPoint] (7.000, 1.000) circle;
\draw[DPoint] (8.000, 0.000) circle;
\end{tikzpicture}} & 
\hspace{1cm}
\scalebox{0.2}{


\begin{tikzpicture}
\draw[Path] (0.000, 0.000) -- (1.000, 1.000);
\draw[Path] (1.000, 1.000) -- (2.000, 2.000);
\draw[Path] (2.000, 2.000) -- (3.000, 3.000);
\draw[Path] (3.000, 3.000) -- (4.000, 2.000);
\draw[Path] (4.000, 2.000) -- (5.000, 3.000);
\draw[Path] (5.000, 3.000) -- (6.000, 2.000);
\draw[Path] (6.000, 2.000) -- (7.000, 1.000);
\draw[Path] (7.000, 1.000) -- (8.000, 0.000);
\draw[DPoint] (0.000, 0.000) circle;
\draw[DPoint] (1.000, 1.000) circle;
\draw[DPoint] (2.000, 2.000) circle;
\draw[DPoint] (3.000, 3.000) circle;
\draw[DPoint] (4.000, 2.000) circle;
\draw[DPoint] (5.000, 3.000) circle;
\draw[DPoint] (6.000, 2.000) circle;
\draw[DPoint] (7.000, 1.000) circle;
\draw[DPoint] (8.000, 0.000) circle;
\end{tikzpicture}} \\

\scalebox{0.6}{


\begin{tikzpicture}
\node (N0) at (3.500, 0.000){4};
\node (N00) at (0.500, -1.000){1};
\node (N001) at (2.500, -2.000){3};
\node (N0010) at (1.500, -3.000){2};
\draw (N001) -- (N0010);
\draw (N00) -- (N001);
\draw (N0) -- (N00);
\end{tikzpicture}} & 
\hspace{1cm}
\scalebox{0.6}{


\begin{tikzpicture}
\node (N0) at (0.500, 0.000){1};
\node (N01) at (1.500, -1.000){2};
\node (N011) at (3.500, -2.000){4};
\node (N0110) at (2.500, -3.000){3};
\draw (N011) -- (N0110);
\draw (N01) -- (N011);
\draw (N0) -- (N01);
\end{tikzpicture}} &
\hspace{0.5cm} &
\scalebox{0.8}{
\begin{tikzpicture}\node(T2) at (0,-1) {2};\node(T3) at (1,-1) {3};\node(T1) at (1,0) {1};\node(T4) at (2,-1) {4};\draw[color=red] (T2) -- (T1);\draw[color=blue] (T3) -- (T4);\draw[color=red] (T3) -- (T1);\end{tikzpicture}}
\end{tabular}
}
\caption{Initial rise and contacts of Tamari intervals. In this example, the lower path has 2 non initial contacts with the $x$-axis. They correspond to the 2 vertices (1 and 4) on the left border of the smaller binary tree and to the 2 components of the final forest of the interval-poset. The initial rise of the upper path is 3 because it starts with 3 consecutive up steps. They corresponds to nodes labelled 1,2, and 3 on the binary tree. By construction, those nodes have no left subtree which means that the vertices 1,2 and 3 in the interval-poset have no relation $k-1 \trprec k$.}
\label{fig:initial-rise}
\end{figure}

Let $\Phi(y;x,z)$ be the generating function of intervals fo Tamari where $y,x,$ and $z$ respectively count $\Isize(I)$, $\Itrees(I)$, and $\Iir(I)$. 

\begin{align}
\label{eq:initial-rise-generating-function}
\Phi(y;x,z) &= \sum_{I} y^{\Isize(I)}x^{\Itrees(I)}z^{\Iir(I)}, \\
&= 1 + y\,xz + y^2\,(x^2 z^2 +x^2 z + xz^2)\\
&+ y^3\,(x^3 z^3 + 2 x^3 z^2 + 2 x^3 z + 2 x^2 z^3 + 2x^2 z^2 + x^2 z + 2 x z^3 + x z^2) + \cdots
\end{align}

In \cite{mTamari}, it has been proved that $\Phi$ satisfies the following functional equation,

\begin{equation}
\label{eq:initial-rise-functional-equation}
\Phi(y;x,z) = 1 + xyz \Phi(y;x,1) \frac{x \Phi(y;x,z) - \Phi(y;1,z)}{x - 1}.
\end{equation}
By solving this equation, the authors found that the joint distribution of $\Itrees(I)$ and $\Iir(I)$ is symmetric, \emph{i.e.}, that $\Phi(y;x,z) = \Phi(y;z,x)$. The question of a combinatorial proof was left open. The aim of this section is to give such a proof by describing the explicit bijection that exchanges the two statistics on interval-posets.

\subsection{Two decompositions of intervals}

The main idea of the bijection is that an interval-poset can be decomposed in two different ways into two smaller interval-posets. One way is given by the composition operation described in \cite{Me_Tamari}.

\begin{Proposition}
An interval-poset $I$ of size $n$ is fully determined by a triplet $(I_1, I_2, r)$ where $I_1$ and $I_2$ are two interval-posets with $\Isize(I_1) + \Isize(I_2) + 1 = \Isize(I)$ and $r$ is an integer such that $0 \leq r \leq \Itrees(I_2)$. We call this decomposition the \emph{lower contacts decomposition} of the interval and we write $I = LC(I_{1}, I_{2}, r)$.
\end{Proposition}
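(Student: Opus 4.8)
The plan is to make the map $LC$ explicit, to check that its image lies in the set of interval-posets, and to write down an inverse; this is the composition operation of \cite{Me_Tamari}, so I expect the work to be a careful unwinding of the characterization of interval-posets recalled above (for $a<c$ with $a\trprec c$ one has $b\trprec c$ whenever $a<b<c$, and dually for $c\trprec a$). Writing $a=\Isize(I_1)$, $b=\Isize(I_2)$, $n=a+b+1$, I would build $I=LC(I_1,I_2,r)$ on $\{1,\dots,n\}$ as follows: vertex $1$ is a fresh vertex; relabel the copy of $I_2$ onto $\{2,\dots,b+1\}$ by shifting all labels up by $1$, and the copy of $I_1$ onto $\{b+2,\dots,n\}$ by shifting up by $b+1$, keeping all their relations; if $a\ge 1$, add the increasing relation $1\trprec b+2$; finally, the components of the final forest $\dec(I_2)$ are, by the characterization, intervals of consecutive labels, and, listing them from left to right as $C_1,\dots,C_{\Itrees(I_2)}$, I add a decreasing relation from the root of each of the first $\Itrees(I_2)-r$ of them to vertex $1$. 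The object $I$ is the transitive closure of what results.

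For well-definedness I would check the two implications of the characterization. Relations internal to the two shifted blocks cause no trouble since $I_1$ and $I_2$ are already interval-posets. The relation $1\trprec b+2$ forces exactly $k\trprec b+2$ for $2\le k\le b+1$, which is harmless. All other added relations point at vertex $1$, and since the swallowed components $C_1,\dots,C_{\Itrees(I_2)-r}$ form an initial block of consecutive labels, the set $\{k:k\trprec_I 1\}$ is an interval $\{2,\dots,m\}$, so the second implication holds at $a=1$ and trivially elsewhere. Vertex $1$ is never above any vertex, so antisymmetry survives the transitive closure and $I$ is a genuine interval-poset. A direct count then yields $\Isize(I)=n$, and $\Itrees(I)=\Itrees(I_1)+r+1$, since vertex $1$, the $r$ components of $\dec(I_2)$ left untouched, and the $\Itrees(I_1)$ components coming from the $I_1$-block are exactly the roots of $\dec(I)$.

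For the inverse I would start from an interval-poset $I$ of size $n\ge 1$, declare vertex $1$ to be the fresh vertex, and cut as follows: if vertex $1$ has no increasing relation, set $I_1=\emptyset$; otherwise put $j_0=\min\{j:1\trprec_I j\}$ and let $I_1$ be the induced subposet on $\{j_0,\dots,n\}$, relabelled down. Let $I_2$ be the induced subposet on $\{2,\dots,j_0-1\}$, relabelled down by $1$, and let $r$ be the number of connected components of $\dec(I)$ contained in $\{2,\dots,j_0-1\}$; then $0\le r\le\Itrees(I_2)$. I would then verify, block by block, that applying $LC$ to $(I_1,I_2,r)$ returns $I$, and conversely that decomposing $LC(I_1,I_2,r)$ returns $(I_1,I_2,r)$.

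The step I expect to be the main obstacle is the structural lemma underlying both directions: using the characterization, one must show that in an arbitrary interval-poset $I$ of size $\ge 1$, the set $\{k:k\trprec_I 1\}$ is an interval $\{2,\dots,m\}$ that is a union of whole connected components of $\dec(I)$ restricted to $\{2,\dots,n\}$; that vertex $1$'s increasing relations reach exactly a terminal interval $\{j_0,\dots,n\}$ of labels; and that $I$ carries no further relations linking $\{2,\dots,j_0-1\}$ to $\{j_0,\dots,n\}$, so that the two restrictions really are interval-posets and the only remaining gluing datum is a single integer $r$ in the stated range. Granting this separation, the size identity $\Isize(I_1)+\Isize(I_2)+1=\Isize(I)$ and the bound on $r$ are immediate, and the mutual-inverse check reduces to routine bookkeeping.
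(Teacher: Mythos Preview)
Your construction is a valid bijection between interval-posets of size $n$ and triplets $(I_1,I_2,r)$ with the stated constraints, but it is \emph{not} the map $LC$ that the paper defines. In the paper (following \cite{Me_Tamari}), the distinguished new vertex is placed at label $k=\Isize(I_1)+1$, with $I_1$ on $\{1,\dots,k-1\}$ and $I_2$ on $\{k+1,\dots,n\}$; one adds the increasing relations $y\trprec k$ for every $y\in I_1$ and the decreasing relations $x_i\trprec k$ for the first $r$ roots $x_1<\dots<x_s$ of $\dec(I_2)$. The inverse picks out $k$ as the \emph{largest} label with $i\trprec k$ for all $i<k$. Your map instead puts the fresh vertex at label~$1$, places $I_2$ on $\{2,\dots,b+1\}$ and $I_1$ on $\{b+2,\dots,n\}$, and your inverse cuts at $j_0=\min\{j:1\trprec j\}$. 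On the example in \eqref{eq:lower-contact-dec} the paper obtains $(\Isize(I_1),\Isize(I_2),r)=(3,4,2)$, whereas your decomposition gives $(7,0,0)$, so the two maps genuinely differ. A visible consequence is the contact count: the paper's $LC$ satisfies $\Itrees(I)=\Itrees(I_1)+1+\Itrees(I_2)-r$, which is exactly what is used in the proof of the next proposition on $\beta$, while your version gives $\Itrees(I)=\Itrees(I_1)+1+r$.

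Two smaller remarks on your outline. First, your claim that ``vertex~$1$'s increasing relations reach exactly a terminal interval $\{j_0,\dots,n\}$'' is false as stated: for the interval-poset on $\{1,2,3\}$ with the single relation $1\trprec 2$ one has $\{j:1\trprec j\}=\{2\}$, not $\{2,3\}$. What you actually need (and what does hold) is the separation statement: there is no decreasing relation from $\{j_0,\dots,n\}$ into $\{1,\dots,j_0-1\}$, because $j\trprec k$ with $k<j_0\le j$ would force $j_0\trprec k$ and hence $1\trprec j_0\trprec k$, contradicting the minimality of $j_0$. Second, your claim that $\{k:k\trprec 1\}$ is a union of \emph{whole} components of $\dec(I_2)$ deserves an explicit argument; it follows because in $\dec(I_2)$ every element of a component is $\trprec$ its (minimal-label) root, so if the root lies in $\{2,\dots,m\}$ the entire component does. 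With these adjustments your bijection is sound; just be aware that it is a different $LC$ from the one the rest of the paper builds on.
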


This proposition is a direct consequence of \cite[Prop. 3.7]{Me_Tamari}. Let $(I_1, I_2, r)$ be such a triplet and $I_2$ be such that $\Itrees(I_2) = s$ with $x_1 \leq x_2 \leq \dots \leq x_s$ the roots of $\dec(I_2)$. Then $I$ is the shifted concatenation of $I_1$, a new vertex $k$, and $I_2$ with $y \trprec k$ for all $y \in I_1$ and $x_i \trprec k$ for $1 \leq i \leq r$. Conversely, if $I$ is an interval-poset, its root $k$ is the vertex with maximal label satisfying $i \trprec k$ for all $i < k$. Then $I_1$ is the subposet formed by vertices $i < k$ and $I_2$ is the subposet formed by vertices $j > k$. Finally, $r$ is the number of children of $k$ in $\dec(I)$. See the example bellow.

\begin{equation}
\begin{array}{ccccc}
\label{eq:lower-contact-dec}
\scalebox{0.6}{

\begin{tikzpicture}\node(T1) at (0,0) {1};\node(T3) at (1,-1) {3};\node(T2) at (1,0) {2};\node(T5) at (2,-2) {5};\node(T6) at (3,-2) {6};\node(T4) at (3,0) {4};\node(T7) at (4,-2) {7};\node(T8) at (4,-3) {8};\draw[color=blue] (T1) -- (T2);\draw[color=blue] (T2) -- (T4);\draw[color=blue] (T3) -- (T4);\draw[color=red] (T3) -- (T2);\draw[color=red] (T5) -- (T4);\draw[color=blue] (T6) -- (T7);\draw[color=red] (T6) -- (T4);\draw[color=red] (T8) -- (T7);\end{tikzpicture}} & 
\hspace{1cm} &
= &
\hspace{1cm} &
LC
\left( \scalebox{0.6}{

\begin{tikzpicture}\node(T1) at (0,0) {1};\node(T2) at (1,0) {2};\node(T3) at (1,-1) {3};\draw[color=blue] (T1) -- (T2);\draw[color=red] (T3) -- (T2);\end{tikzpicture}

},
\scalebox{0.6}{


\begin{tikzpicture}
  \node(T1) at (-1,-1) {1};
  \node(T2) at (0,-1) {2};
  \node(T3) at (1,-1) {3};
  \node(T4) at (1,-2) {4};
  \draw[color=blue] (T2) -- (T3);
  \draw[color=red] (T4) -- (T3);
\end{tikzpicture}

2 \right)
\end{array}
\end{equation}



We now give a new way to decompose the interval.

\begin{Proposition}
An interval-poset $I$ of size $n$ is fully determined by a triplet $(I_1, I_2, r)$ where $I_1$ and $I_2$ are two interval-posets with $\Isize(I_1) + \Isize(I_2) + 1 = \Isize(I)$ and $r$ is an integer such that $0 \leq r \leq \Iir(I_2)$. We call this decomposition the \emph{initial rise decompostion} and we write $I = IR(I_{1}, I_{2}, r)$.
\end{Proposition}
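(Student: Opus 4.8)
The plan is to mirror the lower contacts decomposition, but carried out on the increasing forest $\inc(I)$ instead of the decreasing forest $\dec(I)$; equivalently, one can invoke a left-right symmetry of interval-posets. First I would recall from \cite{Me_Tamari} the symmetry that sends an interval-poset $I$ on $\{1,\dots,n\}$ to the interval-poset $I^{*}$ obtained by relabelling $i \mapsto n+1-i$ and exchanging the roles of increasing and decreasing relations (this is the combinatorial shadow of the anti-automorphism of the Tamari lattice obtained by reversing Dyck paths, or reflecting binary trees). Under this involution, $\inc(I)$ becomes $\dec(I^{*})$, so $\Itrees(I^{*})$ equals the number of components of $\inc(I)$, and $\Iir$ and $\Itrees$ are exchanged: $\Iir(I)=\Itrees(I^{*})$ and $\Itrees(I)=\Iir(I^{*})$. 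Then the desired statement is simply the image of the previous Proposition under $I \mapsto I^{*}$: applying $LC$ to $I^{*}$ gives a triple $(J_{1},J_{2},r)$ with $r \le \Itrees(J_{2})$, and setting $I_{1}=J_{1}^{*}$, $I_{2}=J_{2}^{*}$ turns the constraint into $0 \le r \le \Iir(I_{2})$, which is exactly the $IR$ decomposition.

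Alternatively, for a self-contained argument not relying on the symmetry, I would describe $IR$ directly. Given $I$, let $k$ be the smallest label that is minimal in $\inc(I)$ together with all smaller labels forming an initial segment of roots — concretely, the vertex such that $\Iir$ of the "prefix" is maximal; set $I_{1}$ to be the subposet on $\{1,\dots,k-1\}$, $I_{2}$ the subposet on $\{k+1,\dots,n\}$ (both shifted to be standard interval-posets), and let $r$ record how many of the first $\Iir(I_{2})$ roots of $\inc(I_{2})$ are attached above $k$ by an increasing relation. One must check that this data obeys $0 \le r \le \Iir(I_{2})$ and that the forbidden-pattern characterization of interval-posets recalled in Section~\ref{sec:interval-poset} (for $a<c$, $a\trprec c$ forces $b\trprec c$ for all $a<b<c$, and dually) is preserved in both directions. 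For the reconstruction: given $(I_{1},I_{2},r)$, form the shifted concatenation of $I_{1}$, a new vertex $k$, and $I_{2}$, add $k \trprec y$ as an increasing relation for every $y$ in (the shifted copy of) $I_{1}$, and add $k \trprec x_{i}$ as increasing relations for the first $r$ roots $x_{1}\le\cdots\le x_{r}$ of $\inc(I_{2})$; then verify this is a valid interval-poset and that the two maps are mutually inverse.

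The main obstacle is the bookkeeping around the initial rise of $I_{2}$ and why exactly $\Iir(I_{2})+1$ choices of $r$ are available: one must argue that attaching the new vertex $k$ above a prefix of the roots of $\inc(I_{2})$ never violates the interval-poset axioms, and that every valid interval-poset $I$ decomposes in exactly one such way — i.e.\ that the vertex $k$ is canonically determined by $I$. Here I expect the cleanest route is precisely the symmetry argument: since the forbidden-pattern condition is manifestly invariant under the involution $I\mapsto I^{*}$ (it is symmetric in the two types of relations up to relabelling), the well-definedness and bijectivity of $IR$ follow immediately from those of $LC$ proved in \cite[Prop. 3.7]{Me_Tamari}, with no re-verification needed. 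I would therefore present the symmetry proof as the main argument and only sketch the direct description for the reader who wants the explicit combinatorics used later in the bijection of Section~\ref{sec:initial-rise}.
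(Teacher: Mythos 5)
Your main argument collapses at its key claim. The complement involution $I\mapsto I^{*}$ (relabel $i\mapsto n+1-i$ and swap the two kinds of relations) is indeed well defined on interval-posets, but it does \emph{not} exchange $\Itrees$ and $\Iir$: what it gives is $\Itrees(I^{*})=$ the number of trees of $\inc(I)$, and this number is in general different from $\Iir(I)$. Concretely, take $I$ on $\{1,2,3\}$ with increasing relations $1\trprec 3$ and $2\trprec 3$ (a valid interval-poset): then $\Iir(I)=2$, while $\inc(I)$ is a single tree, so $\Itrees(I^{*})=1$. Hence transporting the lower-contacts decomposition through the involution produces a (perfectly valid) decomposition in which $r$ is bounded by the number of trees of the initial forest of one factor — not by its initial rise — which is a different statement from the proposition. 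Note also that if such an involution exchanged $\Itrees$ and $\Iir$, the symmetry $\Phi(y;x,z)=\Phi(y;z,x)$ would be combinatorially immediate and the whole bijection $\beta$ of this section would be pointless; the absence of any such naive symmetry is exactly why the question was left open in \cite{mTamari}.

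Your fallback ``direct'' construction does not repair this. Adding $k\trprec y$ for every $y$ in $I_{1}$ (these are decreasing relations, since $y<k$, not increasing ones as you write) forces by the interval-poset axiom all relations $b\trprec y$ for $y<b<k$, so it is only consistent for very special $I_{1}$; and even read charitably as the mirror of $LC$ (all of $I_{2}$ preceding $k$, plus $r$ roots of $\inc(I_{1})$ preceding $k$), the admissible values of $r$ are again counted by trees of an initial forest, not by an initial rise, and the distinguished vertex $k$ you describe is not canonically the right one. The paper's construction is genuinely different and cannot be obtained by mirroring $LC$: the new vertex is inserted \emph{inside} $I_{2}$ at label $\Iir(I_{2})-r+1$, keeping the increasing relations and redistributing the decreasing ones so that every old vertex keeps its number of children; the resulting $I_{2}'$ is then spliced into $I_{1}$ right after the vertex $a=\Iir(I_{1})$, adding $j\trprec a$ for all $j$ of $I_{2}'$ together with the increasing relations to the shifted $a+1$; uniqueness is proved by recovering $a$ and the inserted vertex from $I$ and reversing the insertion. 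So the conclusion that ``no re-verification is needed'' is unfounded: an actual construction and an explicit inverse, along the lines just described, are required.
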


This decomposition has not been described before. It comes from a new composition operation between interval-posets that we call the \emph{initial rise} composition. It is described in two steps. First, let $I_2$ be an interval-poset and $r$ such that $0 \leq r \leq \Iir(I_2)$, and let us insert a new vertex into $I_2$ to obtain an interval-poset $I_2'$. The label of the new vertex is $k = \Iir(I_2) - r + 1$ and the labels of the vertices of $I_2$ are shifted accordingly (the smaller ones are unchanged and the larger ones are shifted by 1). The increasing relations of $I_2$ are left unchanged and an extra relation $k \trprec k+1$ is added if $k+1 \leq \Isize(I_2)$. The decreasing relations are replaced such that the number of children of each former vertex of $I_2$ is the same in $\dec(I_2')$ as it was in $\dec(I_2)$ (the condition on the decreasing relations of the interval-posets implies that there is only one way of satisfying this condition). This insertion process is illustrated by Figure \ref{fig:initial-rise-insertion}.

\begin{figure}[ht]
\centering
\begin{tabular}{ccc}
\scalebox{0.6}{
\begin{tikzpicture}\node(T2) at (0,-1) {2};\node(T4) at (1,-3) {4};\node(T3) at (1,-2) {3};\node(T1) at (2,0) {1};\node(T5) at (2,-1) {5};\node(T6) at (2,-4) {6};\draw[color=blue] (T2) -- (T5);\draw[color=red] (T2) -- (T1);\draw[color=blue] (T3) -- (T5);\draw[color=red] (T3) -- (T1);\draw[color=blue] (T4) -- (T5);\draw[color=red] (T4) -- (T3);\draw[color=red] (T5) -- (T1);\draw[color=red] (T6) -- (T5);
\end{tikzpicture}}
&
\scalebox{0.6}{
\begin{tikzpicture}\node(T2) at (0,-1) {2};\node(T3) at (1,-2) {3};\node(T5) at (2,-3) {5};\node(T4) at (2,-2) {4};\node(T1) at (2,0) {1};\node(T6) at (3,-1) {6};\node(T7) at (3,-4) {7};\draw[color=blue] (T2) -- (T6);\draw[color=blue] (T3) -- (T4);\draw[color=blue] (T4) -- (T6);\draw[color=blue] (T5) -- (T6);
\end{tikzpicture}}
&
\scalebox{0.6}{
\begin{tikzpicture}\node(T2) at (0,-1) {2};\node(T3) at (1,-2) {3};\node(T5) at (2,-3) {5};\node(T4) at (2,-2) {4};\node(T1) at (2,0) {1};\node(T6) at (3,-1) {6};\node(T7) at (3,-4) {7};\draw[color=blue] (T2) -- (T6);\draw[color=red] (T2) -- (T1);\draw[color=blue] (T3) -- (T4);\draw[color=red] (T3) -- (T1);\draw[color=blue] (T4) -- (T6);\draw[color=red] (T4) -- (T1);\draw[color=blue] (T5) -- (T6);\draw[color=red] (T5) -- (T4);\draw[color=red] (T7) -- (T6);
\end{tikzpicture}} \\
$\Iir(I_2) = 4$ &
\parbox{4.5cm}{We do the insertion for $r=2$, so the inserted vertex is 3.} &
\parbox{4.5cm}{We replace the decreasing relations so that every former vertex of $I_2$ has the same number of children.}
\end{tabular}
\caption{Insertion into an interval-poset for the initial rise composition with $r = 2$.}
\label{fig:initial-rise-insertion}
\end{figure}
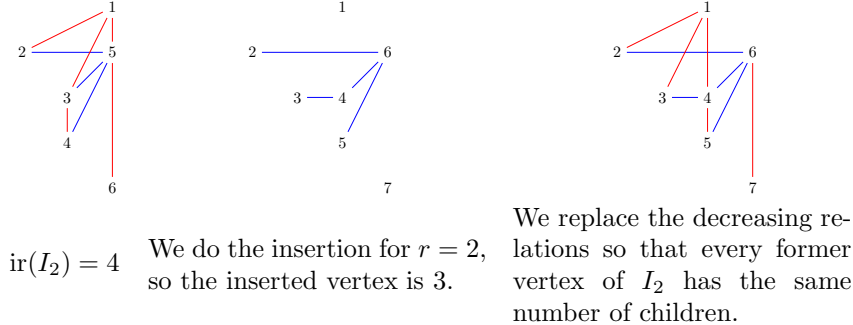

The second step of the composition consists in merging $I_1$ and $I_2'$. Let $a = \Iir(I_1)$. Insert $I_2'$ right after $a$, which means shift the vertices of $I_2'$ by $a$ and the vertices of $I_1$ bigger than $a$ by $\Isize(I_2')$. Then add decreasing relations $j \trprec a$ for all $j$ of $I_2'$. Finally, if $a \neq \Isize(I_1)$ then there was a relation $a \trprec a+1$ in $I_1$ which is now $a \trprec a+1+\Isize(I_2') = b$. We then add all increasing relations $j \trprec b$ for all $j$ in $I_{2}'$.

\begin{figure}[ht]
\centering
\scalebox{0.8}{\begin{tabular}{ccccc}
\scalebox{0.8}{
\begin{tikzpicture}\node(T2) at (0,-1) {2};\node(T1) at (0,0) {1};\node(T3) at (1,0) {3};\node(T4) at (1,-2) {4};\draw[color=blue] (T1) -- (T3);\draw[color=blue] (T2) -- (T3);\draw[color=red] (T2) -- (T1);\draw[color=red] (T4) -- (T3);
\end{tikzpicture}}
&
\hspace{0.8cm}
&
\scalebox{0.8}{
\begin{tikzpicture}\node(T1) at (0,0) {1};\node(T2) at (1,0) {2};\node(T3) at (1,-1) {3};\draw[color=blue] (T1) -- (T2);\draw[color=red] (T3) -- (T2);
\end{tikzpicture}}
&
\hspace{0.8cm}
&
\scalebox{0.8}{
\begin{tikzpicture}\node(T3) at (0,-2) {3};\node(T5) at (1,-3) {5};\node(T4) at (1,-2) {4};\node(T2) at (1,-1) {2};\node(T1) at (1,0) {1};\node(T6) at (2,0) {6};\node(T7) at (2,-4) {7};\draw[color=blue] (T1) -- (T6);\draw[color=blue] (T2) -- (T6);\draw[color=red] (T2) -- (T1);\draw[color=blue] (T3) -- (T4);\draw[color=red] (T3) -- (T2);\draw[color=blue] (T4) -- (T6);\draw[color=red] (T4) -- (T2);\draw[color=blue] (T5) -- (T6);\draw[color=red] (T5) -- (T4);\draw[color=red] (T7) -- (T6);
\end{tikzpicture}} \\
$I_1$ & 
\hspace{0.8cm} &
$I_2'$ & 
\hspace{0.8cm} &
$I$
\end{tabular}}
\caption{Construction of $I$ from $I_1$ and $I_2'$.}
\end{figure}

Note that this process can be reversed: an interval $I$ is uniquely decomposed into $I_1$ and $I_2$. The vertex $a$ of $I_1$ that we used to merge $I_1$ and $I_2'$ is the vertex of $I$ with maximal label such that
\begin{itemize}
\item  $a \leq \Iir(I)$
\item $j \trprec a$ for all $a \leq j \leq \Iir(I)$
\item and if $\Iir(I) \neq \Isize(I)$ then $a \trprec \Iir(I) +1$.
\end{itemize}
The interval-poset $I_2'$ is then the subposet formed by vertices $a < j < \Iir(I) + 1$. Then, the inserted vertex in $I_2'$ is always its initial rise by construction. We remove it by reversing the process of Figure \ref{fig:initial-rise-insertion}. We can now define a recursive bijection.

\begin{Definition}
Let $I$ be an interval-poset and $I = LC(I_1,I_2,r)$ its lower contacts decomposition. Then $\beta(I)$ is recursively defined by
\begin{itemize}
\item $\beta(\emptyset) = \emptyset$ 
\item $\beta(I) = IR(\beta(I_1), \beta(I_2), r)$.
\end{itemize}
\end{Definition}

\begin{Proposition}
Let $I$ be an interval-poset such that $\Itrees(I) = a$ and $\Iir(I) = b$, then $\Itrees(\beta(I)) = b$ and $\Iir(\beta(I)) = a$.
\end{Proposition}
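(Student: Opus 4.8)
The plan is to prove both equalities at once by induction on $\Isize(I)$, carrying along the auxiliary fact that $\beta$ is well defined (so that in $\beta(I)=IR(\beta(I_1),\beta(I_2),r)$ one indeed has $r\le\Iir(\beta(I_2))$, which follows from the statement applied to the smaller object $I_2$). The base case $I=\emptyset$ is trivial. For the inductive step write $I=LC(I_1,I_2,r)$, so that $\beta(I)=IR(J_1,J_2,r)$ with $J_i=\beta(I_i)$; note $J_i=\emptyset\iff I_i=\emptyset$, and the induction hypothesis reads $\Itrees(J_i)=\Iir(I_i)$ and $\Iir(J_i)=\Itrees(I_i)$. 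Everything then reduces to four formulas describing how the two statistics transform under the two compositions, after which substitution of the induction hypothesis finishes the proof.

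On the $LC$ side the description of $LC(I_1,I_2,r)$ can be read off directly: the new root $k$ together with the $r$ selected roots of $\dec(I_2)$ forms one tree of $\dec(I)$, the remaining $\Itrees(I_2)-r$ trees of $\dec(I_2)$ survive, and so do all $\Itrees(I_1)$ trees of $\dec(I_1)$, whence $\Itrees(LC(I_1,I_2,r))=\Itrees(I_1)+\Itrees(I_2)+1-r$; and since the relation $\Isize(I_1)\trprec k$ is present exactly when $I_1\neq\emptyset$, one gets $\Iir(LC(I_1,I_2,r))=\Iir(I_1)$ if $I_1\neq\emptyset$ and $\Iir(LC(\emptyset,I_2,r))=1+\Iir(I_2)$ otherwise. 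On the $IR$ side I would trace the two steps of the composition. The inserted vertex of $I_2'$ is its initial rise by construction, so $\Iir(I_2')=\Iir(I_2)-r+1$; in the merge, the initial rise of $I$ runs through $I_1$ up to $a=\Iir(I_1)$ and then through the initial rise of $I_2'$, because the extra increasing relations that are created all point to the vertex $b$, which lies strictly past that prefix — this gives $\Iir(IR(J_1,J_2,r))=\Iir(J_1)+\Iir(J_2)+1-r$ with the convention $\Iir(\emptyset)=0$. For the trees: in the merge every vertex of $I_2'$ acquires a decreasing relation to $a$, so $\dec(I_2')$ is absorbed into the tree of $a$ and $\Itrees(IR(J_1,J_2,r))=\Itrees(J_1)$ when $J_1\neq\emptyset$; when $J_1=\emptyset$ one has $IR(\emptyset,J_2,r)=J_2'$, and it remains to show that the insertion step satisfies $\Itrees(I_2')=\Itrees(I_2)+1$. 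Granting this, the first equality is the clean computation $\Iir(\beta(I))=\Iir(J_1)+\Iir(J_2)+1-r=\Itrees(I_1)+\Itrees(I_2)+1-r=\Itrees(I)$, and the second splits into the two cases exactly matching the two cases above: $\Itrees(\beta(I))=\Itrees(J_1)=\Iir(I_1)=\Iir(I)$ if $I_1\neq\emptyset$, and $\Itrees(\beta(I))=\Itrees(J_2)+1=\Iir(I_2)+1=\Iir(I)$ if $I_1=\emptyset$.

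The $LC$ formulas and the initial-rise count for $IR$ are routine unwinding of the definitions. The genuine obstacle is the remaining claim, that the insertion of Figure~\ref{fig:initial-rise-insertion} increases $\Itrees$ by exactly one. Here the terse instruction ``redistribute the decreasing relations so that every former vertex keeps its number of children'' has to be pinned down precisely, and the clean way is to show that the inserted vertex $k$ ends up a leaf of $\dec(I_2')$. Once that is known an edge count closes the argument: $\dec(I_2')$ has $\Isize(I_2)+1$ vertices, its former vertices contribute the same $\Isize(I_2)-\Itrees(I_2)$ parent–child pairs as in $\dec(I_2)$ and $k$ contributes none, so $\Itrees(I_2')=(\Isize(I_2)+1)-(\Isize(I_2)-\Itrees(I_2))=\Itrees(I_2)+1$. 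Proving that $k$ is forced to be a leaf is most transparent after passing to binary trees, where the insertion becomes the operation that grafts one new node without altering the left-spine length of any right subtree; I expect this to be the only point in the argument that requires real care.
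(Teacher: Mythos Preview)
Your proposal is correct and follows essentially the same approach as the paper: decompose $I=LC(I_1,I_2,r)$, set $J_i=\beta(I_i)$, and compare the effect of $LC$ and $IR$ on the two statistics, using the identities $\Itrees(I)=\Itrees(I_1)+\Itrees(I_2)+1-r$, $\Iir(J)=\Iir(J_1)+\Iir(J_2)+1-r$, and the two-case analysis according to whether $I_1$ is empty. The paper's proof is in fact the terse version of what you wrote; it leaves the induction implicit and simply asserts $\Itrees(J_2')=\Itrees(J_2)+1$, whereas you correctly isolate this as the one point requiring justification and propose to obtain it by showing the inserted vertex is a leaf of $\dec(I_2')$.
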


\begin{proof}
Let $I$ be an interval-poset and $(I_1,I_2,r)$ its lower contacts decomposition, and let $J_1 = \beta(I_1)$,  $J_2 = \beta(I_2)$, and $J = \beta(I)$ the initial-rise composition of $(J_1,J_2,r)$. We have by construction  $\Itrees(I) = \Itrees(I_1) + 1 + \Itrees(I_2) - r$, and $\Iir(J) = \Iir(J_1) + \Iir(J_2') = \Iir(J_1) + \Iir(J_2) + 1 - r$ which proves the first part of the proposition. For the second statistic, we have to consider two cases. Let us first suppose that $I_1$ is not empty. Then $\Iir(I) = \Iir(I_1)$ and also $\Itrees(J) = \Itrees(J_1)$. Now, if $I_1$ is empty, then $\Iir(I) = \Iir(I_2) + 1$ and $\Itrees(J) = \Itrees(J_2') = \Itrees(J_2) + 1$.
\end{proof}

\vspace{-0.1cm}

\subsection{Example}

\vspace{-0.1cm}

We now detail the computation of the image of an interval poset by $\beta$.

Let $I = LC(I_{1}, I_{2}, r)$ be the interval poset,

\begin{center}

\begin{equation}
I = \scalebox{0.7}{

\begin{tikzpicture}
  \node(T1) at (0,0) {1};
  \node(T3) at (1,-1) {3};
  \node(T2) at (1,0) {2};
  \node(T5) at (2,-2) {5};
  \node(T4) at (2,0) {4};
  \node(T6) at (3,-2) {6};
  
  \draw[color=blue] (T1) -- (T2);
  \draw[color=blue] (T2) -- (T4);
  \draw[color=blue] (T3) -- (T4);
  \draw[color=red] (T3) -- (T2);
  \draw[color=blue] (T5) -- (T6);
  \draw[color=red] (T5) -- (T4);
\end{tikzpicture}} = LC
\left( \scalebox{0.7}{\begin{tikzpicture}
  \node(T1) at (0,0) {1};
  \node(T2) at (1,0) {2};
  \node(T3) at (1,-1) {3};

  \draw[color=blue] (T1) -- (T2);
  \draw[color=red] (T3) -- (T2);
\end{tikzpicture}
},
\scalebox{0.7}{

\begin{tikzpicture}
  \node(T1) at (0,0) {1};
  \node(T2) at (1,0) {2};

  \draw[color=blue] (T1) -- (T2);
\end{tikzpicture}
},
1 \right).
\end{equation}

\end{center}

To compute $\beta(I)$, we first need to compute $\beta(I_{1})$ and
$\beta(I_{2})$. Let $I_{1} = LC(I_{1,1}, I_{1,2}, r_{1})$ be the
lower contact decomposition of $I_{1}$.

As $\beta(I_{1,1}) = I_{1,1}$ and $\beta(I_{1,2}) = I_{1,2}$, we just
need to use the initial rise composition in order to compute
$\beta(I_{1})$.

First, we compute $k = ir(I_{1,2}) - r_{1} + 1 = 1$, we add it to
$I_{1,2}$ and we shift the vertices which are greater than or equal to
$k$. As $k + 1 \leq \Isize(I_{1,2}')$, we add a relation $k \trprec
k+1$.

We can now merge $I_{1,1}$ and $I_{1,2}'$. To do this, we compute $a =
ir(I_{1,1}) = 1$ and we insert $I_{1,2}'$ right after $a$ and then we
add $j \trprec a$ for all $j$ in $I_{1,2}'$ which gives us:

\vspace{-0.2cm}

\begin{equation}
  \begin{array}{ccc}
    I_{1} = \scalebox{0.7}{\begin{tikzpicture}
  \node(T1) at (0,0) {1};
  \node(T2) at (1,0) {2};
  \node(T3) at (1,-1) {3};
  \draw[color=blue] (T1) -- (T2);
  \draw[color=red] (T3) -- (T2);
\end{tikzpicture}
} = LC
    \left(
    1,
    1,
    1
    \right)

    \hspace{2cm}

    &
    \Rightarrow
    &

    \beta(I_{1}) 
    = IR 
    \left(
    1,
    1,
    1
    \right)
    = \scalebox{0.7}{\begin{tikzpicture}
  \node(T2) at (0,-1) {2};
  \node(T1) at (1,0) {1};
  \node(T3) at (1,-1) {3};

  \draw[color=blue] (T2) -- (T3);
  \draw[color=red] (T2) -- (T1);
  \draw[color=red] (T3) -- (T1);
\end{tikzpicture}

}
  \end{array}
\end{equation}



The next step is to compute $\beta(I_{2})$. As $\Isize(I_{2}) = 2$, we
do not give the details of the computation.

\vspace{-0.2cm}

\begin{equation}
  \begin{array}{ccc}
    I_{2} =
    \scalebox{0.7}{\begin{tikzpicture}
  \node(T1) at (0,0) {1};
  \node(T2) at (1,0) {2};

  \draw[color=blue] (T1) -- (T2);
\end{tikzpicture}
  
} = LC
    \left(
    1,
    \emptyset,
    0
    \right)

    \hspace{2cm}

    &
    \Rightarrow
    &

    \beta(I_{2}) 
    = IR 
    \left(
    1,
    \emptyset,
    0
    \right)
    = \scalebox{0.7}{\begin{tikzpicture}
  \node(T1) at (0,0) {1};
  \node(T2) at (0,-1) {2};

  \draw[color=red] (T2) -- (T1);
\end{tikzpicture}
}
  \end{array}
\end{equation}





Now that we have computed $\beta(I_{1})$ and $\beta(I_{2})$, we can
compute $\beta(I)$. The first step is to compute $k = ir(I_{2}) - r +
1 = 2$. Now we insert $k$ into $I_{2}$ and shift the vertices
accordingly. The increasing relation of $I_{2}$ are left unchanged and
we add an extra relation $k \trprec k + 1$ as $k \leq
\Isize(I_{2})$. The decreasing relations are replaced such that each
former vertex of $I_{2}$ has the same number of decreasing relations.
In our case, 1 had only one decreasing relation and there is only one
way to add this relation in $I_{2}'$ which is $2 \trprec 1$, so that


\vspace{-0.2cm}

\begin{equation}
  I_{2}' =
  \scalebox{0.7}{\begin{tikzpicture}
  \node(T2) at (0,-1) {2};
  \node(T1) at (0,0) {1};
  \node(T3) at (1,-1) {3};

  \draw[color=blue] (T2) -- (T3);
  \draw[color=red] (T2) -- (T1);
\end{tikzpicture}


}
\end{equation}

The final step consists in merging $I_{1}$ and $I_{2}'$. We compute $a
= ir(I_{1}) = 2$. We insert $I_{2}'$ right after $a$ shifting the
labels accordingly. Then for all $j$ in $I_{2}'$, we add a decreasing
$j \trprec a$ and an increasing relation $j \trprec a + \Isize(I_{2}') +
1$.


\vspace{-0.2cm}

\begin{equation}
  \begin{array}{ccc}
    I = 
    LC \left(
    \scalebox{0.7}{
    \begin{tikzpicture}\node(T1) at (0,0) {1};\node(T2) at (1,0) {2};\node(T3) at (1,-1) {3};\draw[color=blue] (T1) -- (T2);\draw[color=red] (T3) -- (T2);\end{tikzpicture}
    }
    ,
    \scalebox{0.7}{
    \begin{tikzpicture}\node(T1) at (0,0) {1};\node(T2) at (1,0) {2};\draw[color=blue] (T1) -- (T2);\end{tikzpicture}
    }
    , 1\right)

    \hspace{2cm}

    &
    \Rightarrow
    &

    \beta(I) = 
    IR
    \left(
    \scalebox{0.7}{
    \begin{tikzpicture}\node(T2) at (0,-1) {2};\node(T1) at (1,0) {1};\node(T3) at (1,-1) {3};\draw[color=blue] (T2) -- (T3);\draw[color=red] (T2) -- (T1);\draw[color=red] (T3) -- (T1);\end{tikzpicture}
    }
    ,
    \scalebox{0.7}{
    \begin{tikzpicture}\node(T1) at (0,0) {1};\node(T2) at (0,-1) {2};\draw[color=red] (T2) -- (T1);\end{tikzpicture}
    }
    , 
    1\right)
    =
    \scalebox{0.6}{
    \begin{tikzpicture}\node(T4) at (0,-3) {4};\node(T3) at (0,-2) {3};\node(T5) at (1,-3) {5};\node(T2) at (1,-1) {2};\node(T1) at (2,0) {1};\node(T6) at (2,-1) {6};\draw[color=blue] (T2) -- (T6);\draw[color=red] (T2) -- (T1);\draw[color=blue] (T3) -- (T6);\draw[color=red] (T3) -- (T2);\draw[color=blue] (T4) -- (T5);\draw[color=red] (T4) -- (T3);\draw[color=blue] (T5) -- (T6);\draw[color=red] (T5) -- (T2);\draw[color=red] (T6) -- (T1);\end{tikzpicture}
    }

  \end{array}
\end{equation}

We easily check that $\Itrees(I) = \Iir(\beta(I)) = 4$ and that $\Iir(I) = \Itrees(\beta(I)) = 1$.

\vspace{-0.1cm}

\subsection{Other comments}

\vspace{-0.1cm}

It is also possible to decompose an interval-poset in terms of the initial rise composition. Indeed, an interval-poset $I$ appears in a unique initial rise composition of two interval-posets $I_1$ and $I_2$. These two intervals are not the bijective image of the two intervals obtained by a lower contact decomposition. As an example, the initial rise decomposition of the left interval poset of \eqref{eq:lower-contact-dec} gives two intervals of respective size 0 and 7, which cannot be the images of the ones obtained by the lower contact decomposition. However, automatic tests shows that if we recursively decompose an interval $I$ in terms of the initial rise and recompose it by applying the lower contact composition, we still obtain $\beta(I)$. It means that this bijection would actually be an involution on interval-posets. This result is surprising, it has been tested on interval-posets up to size 6 but we do not have yet a formal explanation. 

An interesting remark is that if we add the initial rise parameter to the functional equation we obtain in \cite{Me_Tamari} we do not retrieve the functional equation \eqref{eq:initial-rise-functional-equation} obtained by \cite{mTamari}. Indeed, we get
\begin{equation}
\Phi(y;x,z) = 1 + xyz \frac{x \Phi(y;x,z) - \Phi(y;1,z)}{x - 1} + xy \left( \Phi(y;x,z) - 1 \right)\frac{x \Phi(y;x,1) - \Phi(y;1,1)}{x - 1}.
\end{equation}
Both functional equations are true but they are not trivially equal from an algebraic point of view and call for further investigation.

To finish with, the symmetric distribution of the two statistics is also true for the $m$-Tamari lattices as shown in \cite{mTamari}. We are confident that the combinatorial proof we just gave for the classical case easily generalizes to $m$-Tamari using the results of \cite{Me_Tamari}. We intend to give the details of this construction is some future work.

\section{Flows}
\label{sec:flows}

\vspace{-0.1cm}

\subsection{Definition}

\vspace{-0.1cm}

Let $F$ be a forest of rooted ordered trees. We define a \emph{flow} on $F$ by attaching an \emph{input} $i \geq -1$ on each node of $F$ such that the \emph{outgoing rate} of each node is greater than or equal to 0. The outgoing rate of a node is the sum of the rates of its children nodes plus its own input. In particular, if a node has no children, its outgoing rate is its input. Inputs can be understood as \emph{sources} or \emph{leaks} of some fluid flowing from the nodes towards the root. The condition on the outgoing rate just expresses that the inner flow is never negative. One consequence is that a leak (\emph{i.e.}, an input of value $-1$) can never be placed on a leaf. An example of a flow is given in Figure \ref{fig:flow-example}. The sum of the outgoing rates of the roots is called the \emph{exit rate} of the flow. If the exit rate is 0, the flow is said to be \emph{closed}.

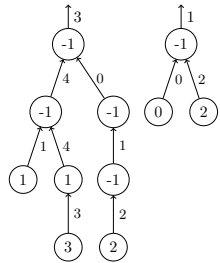
\begin{figure}[ht]
\centering
\scalebox{0.6}{\begin{tikzpicture}
\node[circle,draw](T0) at (1.0, 0) {-1};
\node[circle,draw](T00) at (0.5, -1.5) {-1};
\node[circle,draw](T000) at (0.0, -3.0) {1};
\draw[->](T000) -- node[right]{\small{1}} (T00);
\node[circle,draw](T001) at (1.0, -3.0) {1};
\node[circle,draw](T0010) at (1.0, -4.5) {3};
\draw[->](T0010) -- node[right]{\small{3}} (T001);
\draw[->](T001) -- node[right]{\small{4}} (T00);
\draw[->](T00) -- node[right]{\small{4}} (T0);
\node[circle,draw](T01) at (2.0, -1.5) {-1};
\node[circle,draw](T010) at (2.0, -3.0) {-1};
\node[circle,draw](T0100) at (2.0, -4.5) {2};
\draw[->](T0100) -- node[right]{\small{2}} (T010);
\draw[->](T010) -- node[right]{\small{1}} (T01);
\draw[->](T01) -- node[right]{\small{0}} (T0);
\node(Q0) at (1.0,1) {};
\draw[->] (T0) -- node[right]{3} (Q0);
\node[circle,draw](T1) at (3.5, 0) {-1};
\node[circle,draw](T10) at (3.0, -1.5) {0};
\draw[->](T10) -- node[right]{\small{0}} (T1);
\node[circle,draw](T11) at (4.0, -1.5) {2};
\draw[->](T11) -- node[right]{\small{2}} (T1);
\node(Q1) at (3.5,1) {};
\draw[->] (T1) -- node[right]{1} (Q1);
\end{tikzpicture}}
\caption{A flow on a forest of rooted ordered trees. The exit rate is 4.}
\label{fig:flow-example}
\end{figure}

The combinatorics of flows appears in the context of the Pre-Lie operad in \cite{Flows}. A formal power series $\varepsilon_F(t)$ can be associated with each forest $F$ by setting
\begin{equation}
\label{eq:open-flows}
\varepsilon_F(t) = \sum_f t^{r(f)}
\end{equation}
where the sum is over the flows $f$ of $F$, and $r(f)$ is the exit rate of $f$.  An inductive formula to compute this serie has been given in \cite{Flows}. A very surprising result is that the same induction appears in a very different context on intervals of the Tamari order. Indeed, the recursive description of a polynomial counting the number of elements smaller than a given tree in the Tamari order has been given in \cite{Me_Tamari}. It actually corresponds to the formal series of some flow by a simple change of variable $x = \frac{1}{1 -t}$. By taking the series at $t=0$, we obtain the following result.

\vspace{-0.2cm}

\begin{Theoreme}
The number of closed flows of a given forest $F$ is the number of elements smaller than or equal to a certain binary tree $T(F)$ in the Tamari order.
\end{Theoreme}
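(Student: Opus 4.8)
The statement asserts a correspondence between closed flows on a rooted ordered forest $F$ and the elements $\le T(F)$ in the Tamari order, i.e. the lower set of $T(F)$. Since the paper's philosophy is to build explicit bijections, I would aim for a bijection between closed flows on $F$ and interval-posets whose underlying final forest (decreasing part) is a fixed forest determined by $F$. Concretely, recall from Section~\ref{sec:interval-poset} that an interval-poset $I$ encodes an interval $[T_1,T_2]$, that $\dec(I)$ recovers $T_2$, and that the Tamari elements $\le T_2$ are exactly the $T_1$ with $[T_1,T_2]$ an interval. So fixing $T_2 = T(F)$ and ranging over all valid $T_1 \le T_2$ is the same as ranging over all interval-posets $I$ with $\dec(I)$ equal to a prescribed forest. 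The goal therefore becomes: \emph{the closed flows on $F$ are in bijection with the interval-posets $I$ such that $\dec(I)$ corresponds to $T(F)$.}

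\textbf{Key steps.} First I would make precise the forest $\dec(I)$ attached to a binary tree $T_2$: by the construction of Section~\ref{sec:interval-poset}, this is the planar forest of decreasing relations, and the map $T_2 \mapsto (\text{this forest})$ is a bijection from binary trees to rooted ordered forests (this is essentially the final-forest construction $\dec$ on a binary search tree). Define $T(F)$ to be the binary tree whose decreasing forest is $F$ under this bijection; this is the promised $T(F)$. Second, I would translate the defining property of interval-posets with a given $\dec(I)=F$: we must choose the increasing relations $\inc(I)$, and the compatibility condition from \cite{Me_Tamari} (the one recalled after Figure~\ref{fig:interval-poset-construction}) constrains exactly which increasing forests can be glued onto $F$. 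The heart of the argument is to show that these admissible choices of increasing structure are counted by a local, node-by-node rule on $F$, and that this rule is precisely the flow condition: to each node of $F$ one assigns a nonnegative integer (how far ``up'' along the initial segment of its subtree an increasing relation reaches), the admissibility translates into the inequality that the partial sums along children stay $\ge 0$, and closedness corresponds to the boundary condition that the excess at the roots vanishes. Third, I would verify that under the change of variables $x = 1/(1-t)$ the generating function $\varepsilon_F(t)$ of \eqref{eq:open-flows} matches the polynomial of \cite{Me_Tamari} counting elements $\le T(F)$ weighted by the catalytic parameter $\Itrees$; specializing $t=0$ (equivalently reading off closed flows, $r(f)=0$) then yields the stated count. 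Since both sides satisfy the \emph{same} inductive recursion — the one from \cite{Flows} for $\varepsilon_F$ and the one from \cite{Me_Tamari} for the Tamari polynomial — the cleanest route is an induction on the structure of $F$ (a root with an ordered list of subtrees), checking that the flow recursion and the interval-poset recursion of \cite{Me_Tamari} coincide term by term after the substitution.

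\textbf{Main obstacle.} The delicate point is the middle step: turning the global compatibility condition on interval-posets into a purely local flow condition on $F$. One must understand exactly how an increasing relation in $I$ interacts with the decreasing forest $F$ — which pairs $(a,c)$ with $a<c$ may carry $a\trprec c$ given that $\dec(I)=F$ is fixed, and how the ``no gaps'' axiom (if $a\trprec c$ then $b\trprec c$ for all $a<b<c$) forces the set of increasing relations into the shape of a flow. I expect the right bookkeeping is to read, for each node $v$ of $F$, the length of the run of consecutive labels immediately following $v$'s block that are forced to precede the parent of $v$; showing this assignment is unconstrained except for the running-nonnegativity and the root boundary condition is where the real work lies. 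Once that dictionary is in place, matching it to the exit rate $r(f)$ and to the inductive formula of \cite{Flows} is routine, and setting $r(f)=0$ gives closed flows $\leftrightarrow$ Tamari elements $\le T(F)$, as claimed.
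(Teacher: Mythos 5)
Your reduction to interval-posets contains a concrete error: you assert that $\dec(I)$ recovers the \emph{upper} tree $T_2$, and hence that fixing $T_2=T(F)$ amounts to fixing the decreasing forest to be $F$. But the interval-poset of $[T_1,T_2]$ is built from $\dec(T_1)\cup\inc(T_2)$: the decreasing relations come from the \emph{lower} tree and the increasing relations from the \emph{upper} tree (the sentence in Section~\ref{sec:interval-poset} you are leaning on has the two roles inverted -- it is a typo, as the construction itself, the linear-extension description, and Figure~\ref{fig:interval-poset-construction} show). Consequently, interval-posets with $\dec(I)$ prescribed by $F$ correspond to intervals with fixed \emph{lower} tree, i.e.\ to the elements \emph{greater} than or equal to your $T(F)$, not the ideal below it; the counts differ. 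A two-node check already breaks your plan: the forest $F$ consisting of a root with one child has two closed flows, while the binary tree whose final forest is that chain is the Tamari maximum of size $2$, so only one element lies above it (and indeed two lie below it, as the theorem requires). The correct setup -- and the one the paper uses -- is the mirror of yours: the forest $F$ determines the \emph{increasing} relations (so the upper tree is $T(F)$, whose final forest has shape $F$), and the flow must encode the \emph{decreasing} relations, i.e.\ the lower trees $T'\le T(F)$.

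Beyond this swap, the heart of your argument -- the ``local, node-by-node rule'' turning the interval-poset axioms into the flow condition -- is precisely what you leave unconstructed (``where the real work lies''), and it is the actual content of the paper's proof. The paper labels $F$ in preorder, adds $i\trprec j$ for $j$ the first non-descendant of $i$ (this fixes $\inc$, hence the upper tree $T(F)$), and then converts the flow into decreasing relations: treating the $-1$ inputs in decreasing label order, each negative input $i$ has a source $j$ (the first strictly positive input among its descendants in label order) and contributes $j'\trprec i$ for all $i<j'\le j$; bijectivity is then proved by exhibiting the inverse (the parent of $j$ is the largest $i<j$ with $i\ntrprec j$, and a vertex $i$ admitting some $j>i$ with $j\trprec i$ receives a $-1$ whose source is the largest such $j$). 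Your fallback route -- checking that the recursion of \cite{Flows} for $\varepsilon_F$ in \eqref{eq:open-flows} matches the recursion of \cite{Me_Tamari} under $x=1/(1-t)$ and setting $t=0$ -- is legitimate and is exactly the non-bijective proof the paper alludes to before stating the theorem, but since you neither state the two recursions nor verify they coincide, your text remains a plan rather than a proof.
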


\vspace{-0.2cm}

The binary tree is obtained by a very classical bijection between forests of ordered trees and binary trees. The forest $F$ is actually the final forest of the binary tree $T(F)$, see Figure \ref{fig:forest-flow-list} for an example. This theorem can be proved by comparing the recursive formulas of \cite{Flows} and \cite{Me_Tamari} but our purpose here is to give an explicit bijection. More precisely, the bijection is defined between closed flows of forests and interval-posets. The forest itself gives the increasing relation of the interval-poset. The decreasing relations are then obtained from the inputs of the flow.

\begin{figure}[ht]
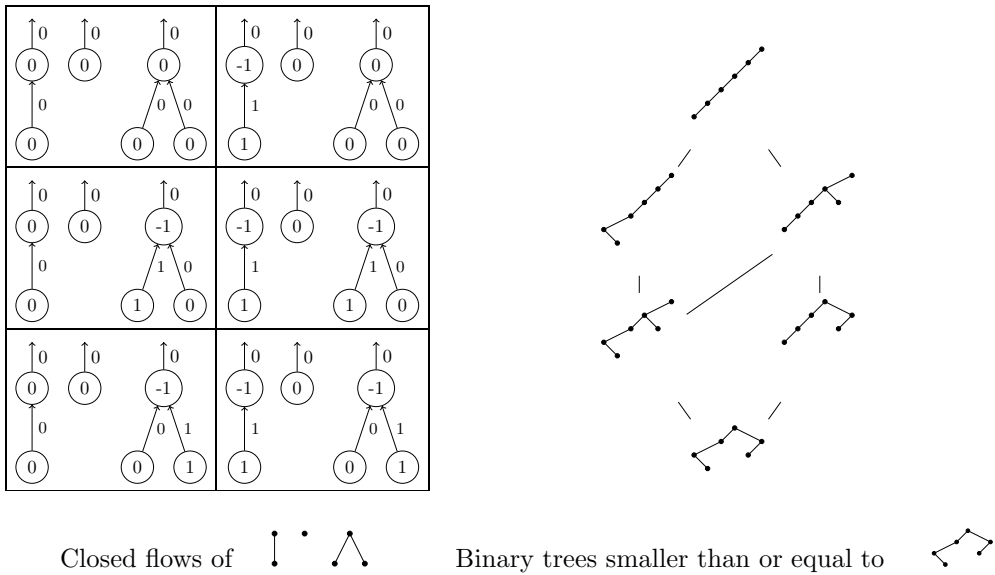

\centering
\def \wlev{2}
\def \hlev{2.8}
\def \sscale{0.6}
\def \fpath{figures/trees/}

\begin{tabular}{cc}
\scalebox{0.7}{
\begin{tabular}{|c|c|}
\hline
\begin{tikzpicture}
\node[circle,draw](T0) at (0.0, 0) {0};
\node[circle,draw](T00) at (0.0, -1.5) {0};
\draw[->](T00) -- node[right]{\small{0}} (T0);
\node(Q0) at (0.0,1) {};
\draw[->] (T0) -- node[right]{0} (Q0);
\node[circle,draw](T1) at (1.0, 0) {0};
\node(Q1) at (1.0,1) {};
\draw[->] (T1) -- node[right]{0} (Q1);
\node[circle,draw](T2) at (2.5, 0) {0};
\node[circle,draw](T20) at (2.0, -1.5) {0};
\draw[->](T20) -- node[right]{\small{0}} (T2);
\node[circle,draw](T21) at (3.0, -1.5) {0};
\draw[->](T21) -- node[right]{\small{0}} (T2);
\node(Q2) at (2.5,1) {};
\draw[->] (T2) -- node[right]{0} (Q2);
\end{tikzpicture} &
\begin{tikzpicture}
\node[circle,draw](T0) at (0.0, 0) {-1};
\node[circle,draw](T00) at (0.0, -1.5) {1};
\draw[->](T00) -- node[right]{\small{1}} (T0);
\node(Q0) at (0.0,1) {};
\draw[->] (T0) -- node[right]{0} (Q0);
\node[circle,draw](T1) at (1.0, 0) {0};
\node(Q1) at (1.0,1) {};
\draw[->] (T1) -- node[right]{0} (Q1);
\node[circle,draw](T2) at (2.5, 0) {0};
\node[circle,draw](T20) at (2.0, -1.5) {0};
\draw[->](T20) -- node[right]{\small{0}} (T2);
\node[circle,draw](T21) at (3.0, -1.5) {0};
\draw[->](T21) -- node[right]{\small{0}} (T2);
\node(Q2) at (2.5,1) {};
\draw[->] (T2) -- node[right]{0} (Q2);
\end{tikzpicture} \\ \hline
\begin{tikzpicture}
\node[circle,draw](T0) at (0.0, 0) {0};
\node[circle,draw](T00) at (0.0, -1.5) {0};
\draw[->](T00) -- node[right]{\small{0}} (T0);
\node(Q0) at (0.0,1) {};
\draw[->] (T0) -- node[right]{0} (Q0);
\node[circle,draw](T1) at (1.0, 0) {0};
\node(Q1) at (1.0,1) {};
\draw[->] (T1) -- node[right]{0} (Q1);
\node[circle,draw](T2) at (2.5, 0) {-1};
\node[circle,draw](T20) at (2.0, -1.5) {1};
\draw[->](T20) -- node[right]{\small{1}} (T2);
\node[circle,draw](T21) at (3.0, -1.5) {0};
\draw[->](T21) -- node[right]{\small{0}} (T2);
\node(Q2) at (2.5,1) {};
\draw[->] (T2) -- node[right]{0} (Q2);
\end{tikzpicture} &
\begin{tikzpicture}
\node[circle,draw](T0) at (0.0, 0) {-1};
\node[circle,draw](T00) at (0.0, -1.5) {1};
\draw[->](T00) -- node[right]{\small{1}} (T0);
\node(Q0) at (0.0,1) {};
\draw[->] (T0) -- node[right]{0} (Q0);
\node[circle,draw](T1) at (1.0, 0) {0};
\node(Q1) at (1.0,1) {};
\draw[->] (T1) -- node[right]{0} (Q1);
\node[circle,draw](T2) at (2.5, 0) {-1};
\node[circle,draw](T20) at (2.0, -1.5) {1};
\draw[->](T20) -- node[right]{\small{1}} (T2);
\node[circle,draw](T21) at (3.0, -1.5) {0};
\draw[->](T21) -- node[right]{\small{0}} (T2);
\node(Q2) at (2.5,1) {};
\draw[->] (T2) -- node[right]{0} (Q2);
\end{tikzpicture} \\ \hline
\begin{tikzpicture}
\node[circle,draw](T0) at (0.0, 0) {0};
\node[circle,draw](T00) at (0.0, -1.5) {0};
\draw[->](T00) -- node[right]{\small{0}} (T0);
\node(Q0) at (0.0,1) {};
\draw[->] (T0) -- node[right]{0} (Q0);
\node[circle,draw](T1) at (1.0, 0) {0};
\node(Q1) at (1.0,1) {};
\draw[->] (T1) -- node[right]{0} (Q1);
\node[circle,draw](T2) at (2.5, 0) {-1};
\node[circle,draw](T20) at (2.0, -1.5) {0};
\draw[->](T20) -- node[right]{\small{0}} (T2);
\node[circle,draw](T21) at (3.0, -1.5) {1};
\draw[->](T21) -- node[right]{\small{1}} (T2);
\node(Q2) at (2.5,1) {};
\draw[->] (T2) -- node[right]{0} (Q2);
\end{tikzpicture} &
\begin{tikzpicture}
\node[circle,draw](T0) at (0.0, 0) {-1};
\node[circle,draw](T00) at (0.0, -1.5) {1};
\draw[->](T00) -- node[right]{\small{1}} (T0);
\node(Q0) at (0.0,1) {};
\draw[->] (T0) -- node[right]{0} (Q0);
\node[circle,draw](T1) at (1.0, 0) {0};
\node(Q1) at (1.0,1) {};
\draw[->] (T1) -- node[right]{0} (Q1);
\node[circle,draw](T2) at (2.5, 0) {-1};
\node[circle,draw](T20) at (2.0, -1.5) {0};
\draw[->](T20) -- node[right]{\small{0}} (T2);
\node[circle,draw](T21) at (3.0, -1.5) {1};
\draw[->](T21) -- node[right]{\small{1}} (T2);
\node(Q2) at (2.5,1) {};
\draw[->] (T2) -- node[right]{0} (Q2);
\end{tikzpicture} \\ \hline
\end{tabular}}
&
\scalebox{0.6}{
\begin{tikzpicture}[baseline=-3.5cm]

\node(T1) at (0,0){
\scalebox{\sscale}{
	\input{\fpath T6-ex1}
}
};

\node(T2) at (-1 * \wlev, -1 * \hlev) {
\scalebox{\sscale}{
	\input{\fpath T6-ex2}
}
};

\node(T3) at (1 * \wlev, -1 * \hlev) {
\scalebox{\sscale}{
	\input{\fpath T6-ex3}
}
};

\node(T4) at (-1 * \wlev,-2 * \hlev) {
\scalebox{\sscale}{
	\input{\fpath T6-ex4}
}
};

\node(T5) at (1 * \wlev, -2 * \hlev){
\scalebox{\sscale}{
	\input{\fpath T6-ex5}
}
};

\node(T6) at (0, -3 * \hlev) {
\scalebox{\sscale}{
	\input{\fpath T6-ex6}
}
};

\draw(T1) -- (T2);
\draw(T1) -- (T3);
\draw(T2) -- (T4);
\draw(T3) -- (T4);
\draw(T3) -- (T5);
\draw(T4) -- (T6);
\draw(T5) -- (T6); 
\end{tikzpicture}}
\\ 
Closed flows of~~~
\scalebox{0.4}{
\begin{tikzpicture}
\node(T0) at (0,0){};
\node(T00) at (0,-1){};
\node(T1) at (1,0){};
\node (T2) at (2.5,0){};
\node(T20) at (2,-1){};
\node(T21) at (3,-1){};
\draw[Point] (T0) circle;
\draw[Point] (T00) circle;
\draw[Point] (T1) circle;
\draw[Point] (T2) circle;
\draw[Point] (T20) circle;
\draw[Point] (T21) circle;
\draw (T00.center) -- (T0.center);
\draw (T20.center) -- (T2.center);
\draw (T21.center) -- (T2.center);
\end{tikzpicture}}

&

Binary trees smaller than or equal to ~~~
\scalebox{0.3}{
	\input{\fpath T6-ex6}
}

\end{tabular}
\caption{Flows of a forest and Tamari ideal.}
\label{fig:forest-flow-list}
\end{figure}

\vspace{-0.2cm}

\subsection{Bijection between flows and interval-posets}

\vspace{-0.1cm}

The first step of the bijection consists in labelling the nodes of the forest. The labelling is done recursively starting with the root followed by its children from left to right. The labelled nodes become the vertices of the interval-poset, see Figure \ref{fig:flows-bijection} for an example. 

We then add the increasing relations of the interval-posets. These relations depend only on the forest itself and not on its actual flow. For each vertex $i$, we add a relation $i \trprec j$ where $j>i$ is the first vertex which is not a 	descendant of $i$. Equivalently, if $i$ has a right brother $j$, we add all relations $i' \trprec j$ where $i'$ runs over all the nodes of the right most branch of $i$. This is illustrated on the first image of Figure \ref{fig:flows-bijection}. 

Finally, we gradually add the decreasing relations. The process is illustrated on Figure \ref{fig:flows-bijection}. At each step, we deal with one of the negative inputs. We take the inputs in the decreasing order of their corresponding labels in the interval-poset (from the first step of the bijection). The \emph{source} of a negative input is the first strictly positive input of its descendants (still following the label order). As an example, on the fourth image of Figure \ref{fig:flows-bijection}, the source of the selected negative input is its left child (labelled 3) and not its right child (labelled 4). For a negative input labelled $i$ with $j$ as a source, we then add all decreasing relations $j' \trprec i$ for all $i < j' \leq j$.  

\vspace{-0.1cm}

\begin{Proposition}
The previously described process is well-defined and gives a bijection between flows of ordered forests and interval-posets.
\end{Proposition}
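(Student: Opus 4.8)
The plan is to verify two things: (i) the process is well-defined, i.e.\ each step can be carried out and the final labelled poset is actually an interval-poset (it satisfies the characterizing property of \cite{Me_Tamari} recalled in Section~\ref{sec:interval-poset}); and (ii) the map is a bijection, which I would establish by constructing an explicit inverse and checking the two compositions are identities, or alternatively by a counting/induction argument on the size of the forest together with injectivity.

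For well-definedness, I would first check that the increasing relations built from the forest do form an initial forest $\inc$ of some binary tree: the relation ``$i \trprec j$ with $j$ the first non-descendant of $i$'' is exactly the increasing-relation structure read off the classical forest-to-binary-tree bijection, so this part is immediate from the material in Section~\ref{sec:interval-poset}. Next, I would check that each decreasing step is legal: when we process a negative input labelled $i$ with source $j$, the descendants of $i$ with labels in $(i,j]$ form an interval of consecutive integers (because the labelling is a depth-first left-to-right numbering, the descendants of any node are a contiguous block, and $j$ is the first positive input among them), so adding ``$j' \trprec i$ for all $i < j' \le j$'' respects the ``$c \trprec a \Rightarrow b \trprec a$ for $a<b<c$'' condition. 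One must also verify the dual condition for increasing relations is preserved — this follows because increasing and decreasing relations never conflict: a decreasing relation $j' \trprec i$ always has $j'>i$ and $j'$ a descendant of $i$, while $i'\trprec j$ increasing has $j$ a non-descendant of $i'$. The real content is checking that after adding \emph{all} the decreasing relations for \emph{all} negative inputs, the resulting relation is still a partial order (transitivity/consistency) and still an interval-poset; here the hypothesis that the flow is \emph{closed} (exit rate $0$) is what guarantees every negative input eventually finds a source and that the ``budget'' of each positive input is not over-allocated.

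For the bijection, I would describe the inverse map: given an interval-poset $I$, recover the forest from $\inc(I)$ via the classical bijection, label it depth-first, and then reconstruct the flow by reading the decreasing forest $\dec(I)$. Concretely, a vertex $i$ gets a negative input exactly when it has a child in $\dec(I)$, and the value/placement of positive inputs is forced by requiring the outgoing rates to match the number of decreasing relations pointing into each vertex. I would then argue that processing negative inputs in decreasing label order in the forward map and this reconstruction are mutually inverse — the key point being that the source of a negative input $i$ is determined by $I$ (it is the minimal $j>i$ such that $j$ has no decreasing relation to any vertex in $(i,j)$, equivalently where the ``block'' of $i$'s decreasing relations ends), so no information is lost.

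The main obstacle, I expect, is the bookkeeping in showing that the forward map lands in the set of interval-posets and that it is exhaustive — in other words, that the closedness condition on flows corresponds \emph{exactly} to the global coherence condition on decreasing relations, with no under- or over-counting. An induction on the number of trees in the forest (peeling off the last tree, or peeling off the leftmost root) is the natural tool: removing a tree from $F$ corresponds to a lower-contacts-type decomposition of $I$, and one checks the flow condition restricts and glues correctly. Once the recursive structure is set up, matching it against Proposition on the $LC$-decomposition should make both well-definedness and bijectivity fall out simultaneously, and will also reprove the Theorem of Section~\ref{sec:flows} combinatorially.
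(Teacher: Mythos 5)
Your proposal follows essentially the same route as the paper's proof: well-definedness is argued by construction (the preorder labelling and the descendant/non-descendant dichotomy prevent any conflict between increasing and decreasing relations, and the added relations respect the interval-poset closure condition), and bijectivity is obtained from the explicit inverse that reads the forest off the increasing relations and the $-1$ inputs together with their sources off the decreasing relations. Two small corrections: in the inverse the source of a vertex $i$ is simply the largest $j>i$ with $j \trprec i$ (your ``minimal $j$'' formulation, taken literally, would always return $i+1$; your ``where the block ends'' phrasing is the right one), and the existence of a source for each negative input follows from the non-negativity of the outgoing rates rather than from closedness, whose role is instead to make the correspondence with interval-posets one-to-one by restricting to closed flows.
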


\vspace{-0.3cm}

\begin{proof}
The first property to check is that the constructed object is indeed an interval-poset. This is true by construction. A decreasing relation $j \trprec i$ can never be added if we already had $i \trprec j$. Indeed, $j \trprec i$ means that $j$ is a descendant of $i$ in the forest and $i \trprec j$ means $j$ is not. Furthermore, it is easy to check that when an increasing relation $i \trprec j$ is added, then all relation $i' \trprec j$ where $i \leq i' < j$ are also added and so the final object satisfies the interval-poset conditions.

To prove that this process is actually a bijection, we need to describe the inverse process to obtain a flow from an interval-poset. First, we have to construct the forest from the increasing relation. This is simply the inverse process of what we described earlier: the parent of a node $j$ is the largest number $i<j$ such that $i \ntrprec j$. Then we have to add the inputs of the flow. Each vertex $i$ such that there exists $j>i$ with $j \trprec i$ receives a $-1$ input and increases the input of a source. Its source is the biggest vertex $j>i$ with $j \trprec i$. Note that a vertex cannot be both a $-1$ input and a source because if $j' \trprec j \trprec i$ with $i<j<j'$, then $j$ cannot be the source of $i$. By a step by step proof, it is clear that this process reverses the one we described earlier. 
\end{proof}

\begin{figure}[b!]
\centering
\scalebox{0.7}{\input{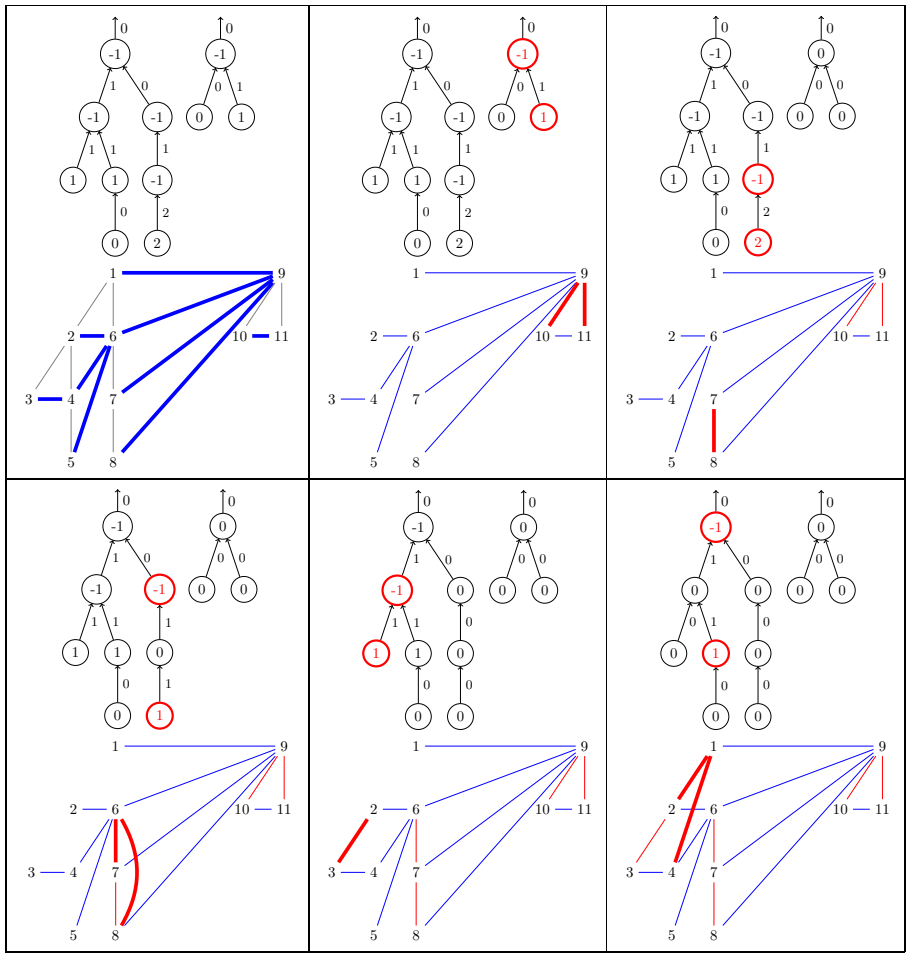}}
\caption{Bijection between flows and interval-posets}
\label{fig:flows-bijection}
\end{figure}

\vspace{-0.3cm}

\subsection{Statistics and open flows}

\vspace{-0.1cm}

Some statistics can be read on both flows and interval-posets. An easy one would be the number of $-1$ inputs on the flow. They trivially correspond to the number of vertices $a$ of the interval-poset such that there is a relation $a+1 \trprec a$. One can also compute the sum of all outgoing rates of non-roots nodes. This is equal to 7 on the Figure \ref{fig:flows-bijection} example. This can also be read on the interval-poset. For each node $a$, we take the set of vertices $\lbrace b > a; b \trprec a; \forall c \trprec a, b \ntrprec c \rbrace$. In other words, these are the maximal elements in terms of increasing relations which precede $a$ with a decreasing relation. As an example, on Figure \ref{fig:flows-bijection}, we obtain $\lbrace 2, 4 \rbrace$ for the vertex 1, $\lbrace 3 \rbrace$ for 2, $\lbrace 7, 8 \rbrace$ for 6, $\lbrace 8 \rbrace$ for 7, and $\lbrace 11 \rbrace$ for 9. By summing all the sizes, we obtain $7$ which is the sum of outgoing rates. 

It is possible to prove by induction that the series of open flows of a given forest \eqref{eq:open-flows} is actually a polynomial in $\frac{1}{1 - t}$. It corresponds to the Tamari polynomial defined in \cite{Me_Tamari}, the number of terms is the number of closed flows of the forest. This can also be explained from a combinatorial point of view. Each open flow can be sent to a unique closed flow. The serie of open flows corresponding to a closed flow $f$ is then a monomial $\left( \frac{1}{1 -t} \right)^r$ where $r$ is equal to $\Itrees(I)$ and $I$ is is the image interval-poset of $f$. We will discuss this further in some future work.

\begin{footnotesize}
  \bibliographystyle{plain}
  \label{sec:biblio}
  \bibliography{arxiv}
\end{footnotesize} 

\end{document}